\numberwithin{equation}{section}
\definecolor{Mygrey}{gray}{0.75}
\def\displayandname#1{\rlap{$\displaystyle\csname #1\endcsname$}%
                      \qquad \texttt{\char92 #1}}
\def\url@leostyle{%
  \@ifundefined{selectfont}{\def\UrlFont{\sf}}{\def\UrlFont{\small\ttfamily}}}
\newtheorem{thm}{Theorem}[section]
\newtheorem{pro}[thm]{Proposition}
\newtheorem{lem}[thm]{Lemma}
\newtheorem{cla}[thm]{Claim}
\newtheorem{con}[thm]{Conjecture}
\newtheorem{cor}[thm]{Corollary}
\theoremstyle{definition}
\newtheorem{df}[thm]{Definition}
\theoremstyle{remark}
\newtheorem{rem}[thm]{Remark}
\newtheorem{exa}[thm]{Example}
\begin{document}

\bibliographystyle{acm}

\title{On pseudo-inverses of matrices and their characteristic polynomials in supertropical algebra}
  \author[Adi Niv]{ Adi Niv$^\dagger$ }

  \thanks{$^\dagger$ Department of Mathematics, Bar-Ilan University, Ramat Gan 52900, Israel.\newline
Email:  {\tt adi.niv@live.biu.ac.il}} 
\thanks{This paper is part of the author's PhD thesis, which was written at Bar-Ilan University under the supervision of Prof. L.\ H.\ Rowen.} 
\thanks{The author wishes to thank Dr. Sergei Sergeev from the University of Birmingham and an anonymous referee for their invaluable comments which were very helpful when preparing this paper.}
\thanks{This research was supported by the Israel Science Foundation (grant no. 1207/12).}

\begin{abstract} The only invertible matrices in tropical algebra are diagonal matrices, permutation matrices and their products. However, the  pseudo-inverse~$A^{\nabla}$, defined as $\frac{1}{det(A)} adj(A)$, with $det(A)$ being the tropical permanent (also called the tropical determinant) of a matrix $A$,  inherits some classical algebraic properties and has some surprising new ones. Defining $B$ and $B'$ to be tropically similar if~$B'=A^{\nabla} B A$, we examine the characteristic (max-)polynomials of   tropically similar matrices as well as those of pseudo-inverses. Other miscellaneous results include a new proof of the identity for~$det(AB)$ and a connection to stabilization of the powers of definite matrices.

\vskip 0.25 truecm

\noindent \textit{Keywords: Tropical and supertropical linear algebra; characteristic polynomial; eigenvalues; Kleene star; permanent; definite matrices; pseudo-inverse.}
\vskip 0.25 truecm

\noindent \textit{AMSC: 15A09 (Primary), 15A15, 15A18, 15A80, 15B33} 	
\end{abstract}

\maketitle

\thispagestyle{myheadings}
\font\rms=cmr8
\font\its=cmti8
\font\bfs=cmbx8

\markright{}
\def\thepage{}

 \section{Introduction}

The \textbf{tropical max-plus semifield} is an ordered group $\mathcal{G}$ (usually the set of real numbers~$\mathbb{R}$ or the set of rational numbers~$\mathbb{Q}$),  together with~$-\infty$, denoted as~$\mathbb{T}=\mathcal{G}\bigcup\{-\infty\}$, equipped with the operations $a\varoplus b=max\{a,b\}$ and $a\varodot b=a+b$, denoted as~$a+b$ and~$ab$ respectively (see ~\cite{MPA} and ~\cite{TAG}). The unit element $1_{\mathbb{T}}$ is actually the element~$0\in \mathbb{Q}$. This arithmetic enables one to simplify non-linear questions by asking them in a (pseudo)-linear  setting (see~\cite{PA}), which can be applied to discrete mathematics, optimization, algebraic geometry and more, as has been well reviewed in ~\cite{LOP}, ~\cite{MAA}, ~\cite{S&A}, ~\cite{MPW}, ~\cite{IM}  and ~\cite{TGA}. In this max-plus language, we may also use notions of linear algebra to interpret combinatorial problems, such as eigenvectors being used to solve the Longest-Distance problem (see ~\cite{MA}).

The intention of this paper is to use an analogous concept of the inverse matrix by passing to a wider structure called the \textbf{supertropical semiring}, equipped with the ghost ideal~$G=\mathcal{G}^{\nu}$, as established and studied by Izhakian and Rowen in ~\cite{STA} and ~\cite{STLA}. The use of the term \textgravedbl pseudo\textacutedbl in this paper is the same as \textgravedbl quasi\textacutedbl throughout the work of Izhakian and Rowen. 

We denote the \textgravedbl standard\textacutedbl\  supertropical semiring  as $$R=T\bigcup G\bigcup \{-\infty \},$$ where~$T=\mathcal{G}$, which contains the so called tangible elements of the structure and where~$\forall a\in T$ we have~$a^{\nu}\in G$ are the ghost elements of the structure, as defined in~\cite{STA}. So~$G$ inherits the order of~$\mathcal{G}$. This enables us to distinguish between a maximal element~$a$ that is  attained only once in a sum, i.e.~$a\in T$ which is invertible, and a maximum that is being attained at least twice, i.e.~$a+a=a^{\nu}\in G$, which  is not invertible. Note that $\nu$ projects the standard supertropical semiring onto~$G$, which can be identified with the usual tropical structure.

In this new supertropical sense, we use the following order relation to describe two elements that are equal up to some ghost supplement:

\begin{df}
Let $a,b$ be any two elements in $R$. We say that $a$ \textbf{ghost surpasses}~$b$, denoted $a\models_{gs} b$, if~$a=b+ghost$, i.e. $a=b\ $ or $a\in G$ with $a^{\nu}\geq b^{\nu}$.  We say $a$ is~$\nu$-equivalent to~$b$, denoted by~$a\cong_{\nu}b$, if~$a^{\nu}=b^{\nu}$.That is, when we use $\nu$ to project from $R$ to $G$, identified with the tropical structure, $\nu$-equivalence becomes equality. 

For matrices $A=(a_{ij}),B=(b_{ij})\in M_{n\times m}(R)$ (and in particular for vectors) $A\models_{gs} B$ means $a_{ij}\models_{gs} b_{ij}\quad \forall i=1,...,n$ and $j=1,...,m$.

For polynomials~$$f(x)=\sum_{i=1}^n a_ix^i,\ g(x)=\sum_{i=1}^n b_ix^i\in R[x],$$ we say that~$f(x)\models_{gs} g(x)$, also denoted as $f \models_{gs}g$, when~$a_{i}\models_{gs}b_{i}\ \forall i$.
\end{df}
\vskip 0.25 truecm

\pagenumbering{arabic}
\addtocounter{page}{1}
\markboth{\SMALL ADI NIV }{\SMALL On pseudo-inverses of matrices and their characteristic polynomials in supertropical algebra}

    \textbf{Important properties of $\models_{gs}$:}

\noindent{1. $\models_{gs}$ is a partial order relation.}

See ~\cite[Lemma 1.5]{STMA2}.

\vskip 0.25 truecm

\noindent{2. If $a\models_{gs} b$ then $ac\models_{gs} bc$.}

\vskip 0.25 truecm

Considering this relation, we use the classical notion~$\frac{1}{det(A)}adj(A)$ (where $det(A)$ is the permanent and then $adj(A)$ is defined as usual) to formulate results in the supertropical setting, which are inaccessible in the usual tropical setting. This notion was introduced and studied in tropical algebra  by Yoeli ~\cite{YL} and Cuninghame-Green ~\cite{MNMX}, and was further investigated by Gaubert ~\cite{G.Ths}, Reutenauer and Straubing ~\cite{MCS} and Sergeev ~\cite{KS}.
We obtain tropical theorems by considering the tangible elements. By Izhakian's results in ~\cite{TA}, this notion satisfies that $\frac{1}{det(A)}adj(A)\cdot A$ is equal to the $Id$ matrix on the diagonal, and $\models_{gs}$ the $Id$ matrix off the diagonal.

In section 3 we discuss type of matrices with $0$ on the diagonal and $0$ determinant, defined as \textbf{definite matrices}. In section 3.1 , we establish that for the set $$\mathcal{N}=\{A^\nabla: A\in M_n(R)\},$$ the operation $\nabla$ is of order 2 (see Theorem 3.5), and in section 3.2 we will show that for a definite matrix $A$ of order $n$ $$A^\nabla\cong_{\nu}A^{\nabla\nabla}\cong_{\nu}A^*\cong_{\nu}A^{k},\ \forall k\geq n-1$$ (see Theorem 3.6 and Proposition 3.7). These results are extended to the supertropical setting from the  results obtained over the tropical structure in ~\cite{YL} and ~\cite{KS}, regarding these closure operations. 

In section 4, we use the factorizability of matrices in $\mathcal{N}$ to give an alternative proof, analogous to the proof in classical linear algebra, of the property $$det(AB)\models_{gs}det(A)det(B)$$ stated in Theorem 2.10. 

In sections 5 we prove that

$$f_{A^\nabla BA}\models_{gs}f_B,\text{ (see Theorem 5.4)}.$$  

In section 6, our considerations lead us to Conjecture 6.2:
$$|A|f_{A^\nabla}(x)\models_{gs}x^nf_A(x^{-1}),$$

\noindent  where $f_M(x)=det(M+xI)$ denotes the characteristic polynomial of a square matrix~$M$.  We prove 6.2 for  the determinant and trace coefficients and for every $2\times 2$ or $3\times 3$ and triangular matrix.

A consequence would be
$$f_{A^{\nabla\nabla}}=f_A,\text{ whenever } f_{A^{\nabla\nabla}} \text{ is tangible.}$$

These properties provide a foundation for further research in representation theory and eigenspace decomposition.

\section{Preliminaries}

\subsection{Matrices}

$ $

The work in ~\cite{STMA}, ~\cite{STMA2} and ~\cite{STMA3} shows that even though the semiring of matrices over the supertropical semiring lacks negation, it satisfies many of the classical matrix theory properties when using the ghost ideal $G$. Following ~\cite{MA}  and ~\cite{STMA} for the tropical and supertropical notations, we give some basic definitions for this theory. One may also find in ~\cite{MA} 
further combinatorial motivations for the objects discussed.
\vskip 0.25 truecm

\begin{df}
A \textbf{permutation track, }of the permutation $\pi\in S_n$, is the sequence $$a_{1,\pi(1)}a_{2,\pi(2)}\cdots a_{n,\pi(n)}$$ of $n$ entries of the matrix $A=(a_{i,j})\in M_n(R)$.
\end{df}

\begin{df}
We define the \textbf{tropical determinant} of a  matrix $A=(a_{i,j})$ to be 

    $$det(A)=\sum_{\sigma \in S_n}a_{1,\sigma(1)}\cdots a_{n,\sigma(n)}.$$  

In the special case where $a_{i,j}\in R,\ \forall i,j$, we refer to any permutation track yielding the highest value in this sum as \textbf{a dominant permutation track}.
\end{df}

The tropical determinant is actually the same as \textit{max permanent} of ~\cite{MA}-~\cite{MNMX}, and the above definition is  a reformulation of the optimal assignment problem.

\begin{df}
We define a matrix~$A\in M_n(\mathbb{T})$ or~$A\in M_n(R)$  to be  \textbf{tropically singular} if there exist at least two different dominant permutation tracks. Otherwise the matrix is \textbf{tropically non-singular}.\end{df}

Consequently  a matrix~$A\in M_n(R)$ is supertropically singular if~$det(A) \in G\bigcup\{-\infty\}$ and supertropically non-singular if~$det(A) \in T$. A matrix $A$ is \textbf{strictly singular} if~$det(A)=-\infty$.  
\vskip 0.25 truecm

Notice that over the tropical semifield we cannot determine if the matrix is tropically non-singular from the value of its determinant, which is always invertible over $\mathbb{T}\setminus\{-\infty\}$. Over the supertropical semiring however, a supertropically non-singular matrix  has an invertible determinant, while a supertropically singular matrix  has a non-invertible determinant. 

As the  definitions of singularity are identical over the tropical and supertropical structures, we will only indicate \textgravedbl non-singular\textacutedbl\  or \textgravedbl singular\textacutedbl\  and \textgravedbl  over  $\mathbb{T}$\textacutedbl \ or \textgravedbl over ~$R$\textacutedbl (which will effect the value and invertibility of the determinant).

\begin{df}
Let $\mathbb{T}^n$ be the free module (see ~\cite{STMA2}) of rank $n$ over the tropical  semifield, and $R^n$ be the free module of rank $n$ over the supertropical  semiring. We define the \textbf{standard base} of $\mathbb{T}^n$, and therefore of $R^n$, to be $e_1,...,e_n$, where 

$e_i=
\begin{cases}
1_{\mathbb{T}}=1_R,\ \text{in the}\ i^{th}\ \text{coordinate}\\
0_{\mathbb{T}}=0_R,\ \text{otherwise} 
\end{cases}$.
\end{df}

\begin{df} The tropical \textbf{identity matrix} in the tropical matrix semiring is 

\noindent the~$n\times n$ matrix with the standard base for its columns. We denote this matrix as $$I_{\mathbb{T}}=I_R=I.$$
\end{df}

\begin{df}
A matrix $A\in M_n(R)$ is \textbf{ invertible} if there exists a matrix~$B\in M_n(R)$ such that $$AB=BA=I.$$
\end{df}

\begin{df} A square matrix $P_{\pi}=(a_{i,j})$ is defined to be a \textbf{permutation matrix} if there exists $\pi\in S_n$ such that 
$a_{i,j}=
\begin{cases}
0_R, j\ne \pi(i)\\
 1_R, j=\pi(i)
\end{cases}.$
\end{df}

\begin{rem}
 A tropical matrix $A$ is invertible if and only if it is  a product of a permutation matrix $P_{\pi}$ and a diagonal matrix~$D$ with an invertible determinant. These type of products are defined as \textbf{generalized permutation matrices}.
\end{rem}

\begin{df} Following the notation in ~\cite{FTM}, we define  three types of tropical elementary matrices, corresponding to the three elementary  matrix operations,  obtained by applying
one such operation to the identity matrix. 

A \textbf{transposition matrix} is obtained from the  identity matrix by switching two rows (resp. columns). Multiplying a matrix~$A$ to the right of such a matrix (resp. to the left) will switch the corresponding rows (resp. columns) in $A$.

A \textbf{diagonal multiplier} is obtained from the  identity matrix where one row (resp. column) has been multiplied by an invertible scalar. Multiplying a matrix~$A$ to the right of such a matrix (resp. to the left) will multiply the corresponding row (resp. columns) in $A$ by the same scalar.

A \textbf{Gaussian matrix} is defined to differ from the identity matrix by having a non-zero entry $r$, in a non-diagonal position $(i,j)$. Multiplying a matrix $A$ to the right of such a matrix (resp. to the left) will add row $j$ (resp. column), multiplied by $r$, to row (resp. column) $i$.
\end{df}

By Remark 2.8, a product of transposition matrices, which is a permutation matrix, is invertible.  A product of diagonal multipliers, which is a diagonal matrix, is invertible. Thus a product of transposition matrices and diagonal multipliers, which is a generalized permutation matrix, is invertible. Gaussian matrices   however, are not invertible, and therefore a product including a Gaussian matrix  is not invertible.

\begin{thm} \textbf{(The rule of determinants)}
For $n\times n$ matrices A,B over the supertropical  semiring R, we have $$det(AB) \models_{gs} det(A)det(B).$$
\end{thm}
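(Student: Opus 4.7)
The plan is to prove $\det(AB) \models_{gs} \det(A)\det(B)$ by directly expanding the permanent on the left and partitioning its terms into those that reassemble into $\det(A)\det(B)$ and those that collapse into ghost elements through a pairing argument.

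First I would expand
$$\det(AB) = \sum_{\sigma \in S_n} \prod_{i=1}^n (AB)_{i,\sigma(i)} = \sum_{\sigma \in S_n}\sum_{f: [n]\to [n]} \prod_{i=1}^n a_{i,f(i)} b_{f(i),\sigma(i)},$$
and split this double sum on whether $f$ is a bijection. When $f \in S_n$, writing $\tau = f$ and $\pi = \sigma \circ f^{-1}$ (so that $\pi$ ranges over $S_n$ independently of $\tau$) rewrites the monomial $\prod_i a_{i,f(i)} b_{f(i),\sigma(i)}$ as $\bigl(\prod_i a_{i,\tau(i)}\bigr)\bigl(\prod_j b_{j,\pi(j)}\bigr)$; summing over $(\tau,\pi)\in S_n\times S_n$ recovers exactly $\det(A)\det(B)$.

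Second, for each non-bijective $f$ I would fix a canonical pair $(i_1,i_2)$ — say the lexicographically smallest with $i_1<i_2$ — such that $f(i_1)=f(i_2)=k$, and define the involution
$$(\sigma,f)\;\longleftrightarrow\;\bigl((\sigma(i_1)\;\sigma(i_2))\cdot\sigma,\;f\bigr).$$
Because $f$ is unchanged and the transposition merely swaps the two semiring factors $b_{k,\sigma(i_1)}$ and $b_{k,\sigma(i_2)}$, which commute in $R$, the two monomials in each orbit are \emph{identical} elements of $R$. Each orbit therefore contributes $t+t=t^{\nu}\in G$, and the entire non-bijective contribution collapses to a single ghost element $h$. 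Combining yields $\det(AB)=\det(A)\det(B)+h$ with $h\in G\cup\{-\infty\}$, which is exactly the definition of $\det(AB)\models_{gs}\det(A)\det(B)$.

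The main obstacle is the bookkeeping behind the involution: the canonical pair $(i_1,i_2)$ must depend only on $f$ and not on $\sigma$ so that the map is truly an involution on the full set of non-bijective pairs, and one must verify it has no fixed points (immediate, since $\sigma$ is a permutation forces $\sigma(i_1)\neq\sigma(i_2)$, so the transposition acts nontrivially on $\sigma$). Once this is settled, the rest is the classical rearrangement argument for $\det(AB)$, with the crucial supertropical ingredient being the identity $t+t=t^{\nu}$ that converts each duplicated monomial into a ghost.
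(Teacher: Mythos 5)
Your proof is correct. You expand $\det(AB)$ as a double sum over $\sigma\in S_n$ and functions $f\colon\{1,\dots,n\}\to\{1,\dots,n\}$, peel off the bijective part (which reassembles into $\det(A)\det(B)$ under the reindexing $\pi=\sigma\circ f^{-1}$), and cancel the non-bijective terms in pairs via a fixed-point-free involution that swaps $\sigma(i_1)$ and $\sigma(i_2)$ at a canonically chosen collision of $f$, so the total non-bijective contribution is ghost. This is a valid direct argument and takes a genuinely different route from the paper's own alternative proof in Section~4: there, the strictly singular case is handled first via column dependence, the remaining case is reduced to $A^\nabla$ and $B^\nabla$ and then to products of elementary matrices via Lemma~2.20, exact equality is observed for transposition and diagonal-multiplier factors, and the Gaussian factors are isolated by a separate two-term computation as the sole source of the ghost discrepancy. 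Your approach is more elementary and self-contained---it needs no factorization machinery, no definite forms, no $\nabla$, and no separate strictly-singular case---and is close in spirit to the direct multilinear expansion argument the paper cites from \cite{STMA}. The Section~4 proof, by contrast, is designed precisely to transport the classical ``product of elementary matrices'' proof to the tropical setting and to showcase Lemma~2.20, trading directness for structural insight about definite forms and pseudo-inverses.
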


\noindent{\textbf{Proof.}} See Theorem 3.5 in ~\cite[\S 3]{STMA}. 
\vskip 0.5 truecm

This theorem also follows from  ~\cite[Proposition 2.1.7]{G.Ths}. Although S. Gaubert's proof is done in the symmetrized tropical semiring, it carries over to the supertropical case.

In this context, we note that the two transfer principles in ~\cite[Theorem 3.3 and Theorem~3.4 ]{LDTS}  allow one to obtain such results automatically in a wider class of semirings, including the supertropical semiring.

The fact that the determinant of a non-singular matrix $A$ over $R$ is tangible means that the matrix has one dominant permutation track. By using  a permutation matrix we can relocate the corresponding permutation to the diagonal, and by using a diagonal matrix we can change the diagonal entries to~$1_R$, obtaining a non-singular matrix whose dominant Id-permutation track equals~$1_R$. That is, $A=P\bar{A}$ where $P$ is an invertible matrix (See Remark 2.8) such that $det(P)=det(A)$ and~$\bar{A}$ is a definite matrix. 

\begin{df} $\bar{A}$  is called the \textbf{ definite form of $A$},  $P$ the \textbf{conductor  of $A$} and we say that $P$ \textit{conducts} the dominant permutation track in $A$. 

The  process of bringing  a non-singular matrix $A$ to its definite form, can be obtained by using transposition matrices and diagonal multipliers either on the columns or on the rows of $A$. Respectively, we denote these conducting matrices as the \textbf{right conductor} and the \textbf{left conductor} of $A$, which are  invertible. The \textbf{right} (resp. left) \textbf{definite form} corresponds to the right (resp. left) conductor.\end{df}
\vskip 0.25 truecm

Definite matrices are not the same as normal matrices, defined in ~\cite{TLI} and ~\cite{MA} to have non-positive off diagonal entries. Over $\mathbb{T}$ the definite form is obtained for not strictly singular matrices, by conducting one of the dominant permutation tracks.
\vskip 0.25 truecm

Looking at  elementary matrices as the \textgravedbl atoms\textacutedbl\   of matrices, we present the following definitions: 

\begin{df} 
We define a matrix to be \textbf{elementarily factorizable} if it can be factored into a product of tropical elementary matrices. 
\end{df}

By Remark 2.8, an invertible matrix  is always elementarily factorizable, while  a non-invertible matrix is elementarily factorizable if and only if its definite form  is factorizable.
\vskip 0.25 truecm

\noindent \textit{Remark}. Writing a permutation $\pi$  as a product of disjoint cycles $\sigma_1,...,\sigma_t$,  the permutation track $a_{1,\pi(1)}\cdots a_{n,\pi(n)}$ can be decomposed into the \textbf{cycle tracks} $C_1,...,C_t$, where~$C_i$ is the cycle track corresponds to the cycle $\sigma_i$.
\vskip 0.25 truecm

We introduce a standard combinatorial property of definite matrices. Since this property will be in use in Section 5, we provide some details regarding the technique of its use.

\begin{lem} For a definite matrix $A=(a_{i,j})\in M_n(R)$, any sequence \begin{equation}a_{i_1,i_2}a_{i_2,i_3}\cdots a_{i_k,i_1},\text{ where }\ i_j\in \{1,...,n\}\ \text{ (not necessarily distinct)}\end{equation}  either describes a permutation track, or is dominated by a subsequence  which describes a permutation track.\end{lem}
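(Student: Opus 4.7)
My plan is to proceed by strong induction on the length $k$ of the sequence, viewing $a_{i_1,i_2}a_{i_2,i_3}\cdots a_{i_k,i_1}$ as the weight of a closed walk $i_1\to i_2\to\cdots\to i_k\to i_1$ in the weighted digraph whose adjacency matrix is $A$.

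For the base case, suppose the indices $i_1,\dots,i_k$ are pairwise distinct. Then the walk is a simple cycle on $\{i_1,\dots,i_k\}$, and extending it to a permutation by declaring the remaining $n-k$ vertices to be fixed points contributes only diagonal entries $a_{j,j}=1_R$, so the sequence evaluates to the value of a genuine permutation track and the first alternative holds. For the inductive step, suppose some index repeats; choose the smallest $b$ such that $i_b\in\{i_1,\dots,i_{b-1}\}$, and let $a<b$ with $i_a=i_b$. By minimality, $i_a,i_{a+1},\dots,i_{b-1}$ are distinct, so the inner block $a_{i_a,i_{a+1}}\cdots a_{i_{b-1},i_b}$ describes a simple cycle, while the outer block, obtained by excising this sub-block and identifying $i_a$ with $i_b$, is a strictly shorter closed walk whose edges form a subsequence of the original edges.

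The crucial ingredient, which I would record first as a preliminary observation, is that every simple cycle in a definite matrix has value dominated by $1_R$: adjoining fixed points on the complementary vertices extends the cycle to a full permutation track of equal value (since each adjoined entry is $a_{j,j}=1_R$), and by definiteness that value is at most $\det(A)=1_R$. Applied to the inner block, this shows the whole sequence is dominated by the outer block; the induction hypothesis applied to the outer block then produces a subsequence of its edges that describes a permutation track and dominates it, and this subsequence is a fortiori a subsequence of the original sequence, closing the induction. The main obstacle I foresee is purely combinatorial bookkeeping: making the cyclic wrap-around of indices precise when the first repetition involves position $k$, and verifying that splicing out an inner cycle indeed leaves a legitimate closed walk whose edge-set is a subsequence of the original. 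Once these index-management issues are handled, the argument is driven entirely by the single definiteness estimate that each simple cycle has value at most $1_R$.
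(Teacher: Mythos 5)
Your proof is correct and takes essentially the same approach as the paper's: both decompose the closed walk into simple cycles at the points of repetition, observe that by definiteness each simple cycle — being extendable to a full permutation track by adjoining the identity on the complementary vertices — has value at most $1_R=\det(A)$, and conclude that the original product is dominated by a single remaining cycle. The only difference is presentational: the paper invokes the cycle decomposition of a closed walk as a ``standard fact from graph theory'' and compares the whole product $\prod_j C_j$ against one $C_i$ in a single step, whereas you organize the same excision as a strong induction on the walk length, which makes the bookkeeping (first repeated index, inner versus outer block) explicit rather than implicit.
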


\begin{proof} 

\noindent As proved in ~\cite[Theorem 4.4]{MA}, due to the determinant obtained by the  $0$-entries on the diagonal of a definite matrix, for every cycle track $C$ we have that $$C=C\cdot C_{Id}\leq P_{Id},$$ where $P_{Id}$ is the Id-permutation track and $C_{Id}$ is the Id-cycle track  extending $C$ to a permutation track. 

We use a standard fact from graph theory, that if a path $(i_1,i_2,i_3,\cdots ,i_k,i_1)$ has repeating nodes, then it can be recursively separated into (not necessarily disjoint) cycles,  starting and ending at the points of repetition. Thus, (2.1) can be  decomposed as the product of (not necessarily disjoint) cycle tracks satisfying $$\prod_{j=1}^l C_j\leq C_i=C_i\cdot C_{Id}\ \ \forall i,$$ where $C_j$ are cycle tracks and $C_i\cdot C_{Id}$ is the extension of $C_i$ to a permutation track.
\vskip 0.25 truecm

\end{proof}

\noindent\textbf{Notation.} For an element $a\in R$, we denote as ${a^\nu}$   the element $b\in G$ s.t.~$a\cong_\nu b$, and as  ${\hat{a}}$  the element  $b\in T$ s.t.~$a\cong_\nu b$. 
\vskip 0.15 truecm

\noindent For a matrix (and in particular for a vector) $A=(a_{i,j})$ we write  $${A^\nu}=(a_{i,j}^\nu)\ \text{{and}}\ {\widehat{A}}=(\widehat{a_{i,j}}).$$ 
\vskip 0.15 truecm

\noindent For a polynomial $f(x)=\Sigma a_ix^i$ we write  $${f^\nu(x)}=\Sigma a_i^\nu x^i\ \text{{and}}\ {\widehat{f(x)}}=\Sigma \hat{a_i}x^i.$$

\begin{df} A \textbf{pseudo-zero} matrix $Z_G$ is a matrix equal to $0_R$ on the diagonal, and whose off-diagonal entries are ghosts or $0_R$.  

A \textbf{pseudo-identity} matrix $I_G$ is a nonsingular, multiplicatively idempotent matrix equal to $I + Z_G$, where $Z_G$ is a pseudo-zero matrix. Thus, for every matrix $A$ and a pseudo-identity $I_G$ we have $I_GA\models_{gs}A$.

 A \textbf{ghost pseudo-identity} matrix is a singular, multiplicatively idempotent matrix equal to $I^\nu + Z_G$.
\end{df}

\begin{df} The ${r,c}$-\textbf{minor} $A_{r,c}$ of a matrix $A = (a_{i,j})$ is obtained by deleting  row~$r$ and column $c$ of $A$. The \textbf{adjoint matrix} $adj(A)$ of $A$ is defined as the matrix~$(a'_{i,j} )$, where
$a'_{i,j} =det(A_{j,i})$. 

The matrix $A^{\nabla}$ denotes $\frac{1}{det(A)}adj(A)$, when $det(A)$ is invertible, and $\left(\frac{1}{\widehat{det(A)}}\right)^\nu adj(A)$, when $det(A)$ is not invertible. Thus over~$R,\ A^{\nabla}$ is defined differently for singular and non-singular matrices. Over $\mathbb{T}$, however, $A^{\nabla}$ is defined the same for every not strictly singular matrix. 

\noindent\textbf{Remark.} Notice that $det(A_{j,i})$ may be obtained as the sum of all permutation tracks in $A$  passing through $a_{j,i}$, with $a_{j,i}$ removed:    $$det(A_{j,i})=\sum_{\tiny{\begin{array}{cc}\pi\in S_n:\\\pi(j)=i\end{array}}}a_{1,\pi(1)}\cdots a_{j-1,\pi(j-1)}a_{j+1,\pi(j+1)}\cdots a_{n,\pi(n)}.$$ When writing  such a permutation as the product of  disjoint cycles, $det(A_{j,i})$ can be presented as: \begin{equation}det(A_{j,i})=\sum_{\tiny{\begin{array}{cc}\pi\in S_n:\\\pi(j)=i\end{array}}}(a_{i,\pi(i)}\cdots a_{\pi^{-1}(j),j})C_{\pi},\end{equation} where $(a_{i,\pi(i)}\cdots a_{\pi^{-1}(j),j})$ is the cycle track missing $a_{j,i}$,  and $C_{\pi}$ is the product of the cycle tracks of $\pi$ that do not include $i$ and $j$.

\end{df}
\noindent\textbf{Fact.} The products $AA^{\nabla}\text{ and } A^{\nabla}A$  are pseudo-identities when $det(A)$ is invertible, and ghost pseudo-identities otherwise.

\noindent These identities can be deduced from~\cite[Proposition 2.1.2]{G.Ths}, by replacing Gaubert's~$\ominus a$ with~$a$. Then $a^\nu\models 0_R$corresponds to  $a^{\bullet}\nabla 0_R$. See ~\cite[Theorem 2.8]{STMA2} for the  proof in the supertropical setting.

\begin{df}  We say that $A^{\nabla}$ is \textbf{the pseudo-inverse} of $A$ over $R$, denoting $$I_A = AA^{\nabla}\text{ and }I'_A=A^{\nabla}A.$$\end{df}

\begin{thm} $ $

\noindent (i) $det(A\cdot adj(A))= det(A)^n$ .

\noindent (ii) $det(adj(A))= det(A)^{n-1}$ .
\end{thm}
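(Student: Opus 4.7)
The plan is to derive both parts from the identity $A \cdot adj(A) = det(A)\cdot I_A$, where $I_A = A\cdot A^\nabla$ is the pseudo-identity given by the Fact above, together with Theorem 2.10.

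For (i), when $det(A)$ is invertible we have $A \cdot adj(A) = det(A)\cdot I_A$, so pulling the scalar out of the determinant gives $det(A \cdot adj(A)) = det(A)^n \cdot det(I_A)$. The pseudo-identity $I_A = I + Z_G$ has identity permutation track equal to $1_R$, while every non-identity permutation track uses at least one entry of $Z_G$ (ghost or $0_R$) and is thus ghost; by the very nonsingularity of $I_A$ these ghost tracks have $\nu$-value strictly below $1_R$, so $det(I_A) = 1_R$. The singular case is parallel, using the ghost pseudo-identity in place of $I_A$: both $det(A\cdot adj(A))$ and $det(A)^n$ turn out to be ghosts of $\nu$-value $n\cdot \nu(det(A))$, hence equal in $R$.

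For (ii), part (i) together with Theorem 2.10 yields $det(A)^n = det(A\cdot adj(A)) \models_{gs} det(A)\cdot det(adj(A))$, and when $det(A)$ is invertible, canceling gives $det(A)^{n-1} \models_{gs} det(adj(A))$. To upgrade to equality I would establish the reverse $det(adj(A))\models_{gs} det(A)^{n-1}$ directly, reducing to the definite case by writing $A = P\bar A$ as in Definition 2.11. Each principal minor $\bar A_{i,i}$ is again definite, so the identity-permutation contribution to $det(adj(\bar A))$ is $\prod_i det(\bar A_{i,i}) = 1_R = det(\bar A)^{n-1}$, while every other contribution is bounded in $\nu$-value by $1_R$ via Lemma 2.13 applied cycle-by-cycle to the permutation tracks involved. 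Multiplying back through the invertible conductor $P$ then recovers $det(adj(A)) = det(A)^{n-1}$.

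The main obstacle is this last combinatorial step for (ii): verifying that for every $\sigma\neq Id$ the product $\prod_i det(\bar A_{\sigma(i),i})$ has $\nu$-value at most $1_R$ in the definite case. Expanding each factor as a sum over minor permutations, each resulting monomial is a product of entries of $\bar A$ whose index pattern, by Lemma 2.13, decomposes into cycle tracks each dominated by the identity track of $\bar A$, whence the whole product has $\nu$-value $\le 1_R$. The remaining work is the bookkeeping to pull the result back through the conductor $P$, which is straightforward because $P$ is invertible.
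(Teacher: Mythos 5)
The paper itself gives no argument here --- it simply cites \cite[Theorem 4.9]{STMA} --- so your proof is necessarily a new route; judged on its own terms it is sound in the non-singular case but has a real gap in the singular one. In the non-singular case your part (i) is fine: $A\cdot adj(A)=det(A)I_A$, the scalar pulls out as $det(A)^n$, and $det(I_A)=1_R$ because the identity track of $I+Z_G$ is $1_R$ while every other track is ghost or $0_R$, so nonsingularity (which is part of Definition 2.14) forces them strictly below $1_R$. For part (ii), though, you work harder than needed: once $det(A)^n=det(A\cdot adj(A))\models_{gs}det(A)det(adj(A))$ and $det(A)^n$ is \emph{tangible}, the relation $\models_{gs}$ already forces equality (a tangible element ghost-surpasses only itself), and cancelling the invertible $det(A)$ gives $det(adj(A))=det(A)^{n-1}$ at once. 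Your reverse-direction argument via the definite form, Lemma 2.13 and the conductor is essentially viable (the degree-regular monomials of $det(adj(\bar A))$ do decompose into cycle tracks, as in \cite[Proposition 3.17]{STMA}), but it is superfluous in the non-singular case.

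The genuine gap is the singular case, which the theorem as stated covers. For (i) your ``parallel'' argument does not go through: a ghost pseudo-identity is \emph{singular} by definition, so you cannot read off $det(I^\nu+Z_G)\cong_\nu 1_R$ from nonsingularity as before; one needs the multiplicative idempotency (or a direct cycle argument) to bound the off-diagonal cycle products by $1_R$. Worse, the passage from $AA^{\nabla}$ back to $A\cdot adj(A)$ loses information, because the scalar $\bigl(1/\widehat{det(A)}\bigr)^{\nu}$ is a ghost: $det(AA^{\nabla})$ being ghost of the right $\nu$-value tells you only the $\nu$-value of $det(A\cdot adj(A))$, not whether it is ghost, so the asserted equality with $det(A)^n$ (a ghost) is not established --- you would need a separate argument (e.g.\ via Theorem 2.10 plus the $\nu$-value computation, or a direct combinatorial expansion as in \cite{STMA}) to rule out a tangible value. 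For (ii) the singular case is simply not addressed: the reduction $A=P\bar A$ to a definite form is only available for non-singular $A$ over $R$, and the forward inequality no longer upgrades itself to equality since $det(A)^{n-1}$ is no longer tangible. As it stands the proposal proves the theorem only for non-singular $A$.
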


\noindent \textbf{Proof.} ~\cite[Theorem 4.9]{STMA}.

\begin{rem} For a definite matrix $A$, we have $A^{\nabla}=\frac{1}{det(A)}adj(A)=adj(A)$, which is also definite.\end{rem}

\begin{proof} The diagonal entries in~$adj(A)$ are  sums of cycle tracks of $A$, and thus the Id summand~$1_R$ dominates every diagonal entry. Also, by Theorem 2.17(ii), we have  $$1_R=det(A^{\nabla}),$$ as required for  definite matrices. 
\end{proof}

\begin{pro} $ adj(AB) \models_{gs} adj(B) adj(A)$.\end{pro}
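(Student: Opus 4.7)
The plan is to expand both entries combinatorially and apply a ``repeated indices give a ghost'' pairing argument.

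First, I would expand $adj(AB)_{i,j} = det((AB)_{j,i})$ as a sum over pairs $(\sigma, (k_\ell))$, where $\sigma$ is a bijection $\{1,\dots,n\}\setminus\{j\}\to\{1,\dots,n\}\setminus\{i\}$ and $(k_\ell)_{\ell\ne j}\in\{1,\dots,n\}^{n-1}$ is the vector of intermediate indices coming from distributing $(AB)_{\ell,\sigma(\ell)}=\sum_{k_\ell}a_{\ell,k_\ell}b_{k_\ell,\sigma(\ell)}$. In parallel I would expand $(adj(B)\,adj(A))_{i,j}=\sum_k det(B_{k,i})\,det(A_{j,k})$ as a sum over triples $(k,\pi,\tau)$, where $\pi\colon\{1,\dots,n\}\setminus\{j\}\to\{1,\dots,n\}\setminus\{k\}$ and $\tau\colon\{1,\dots,n\}\setminus\{k\}\to\{1,\dots,n\}\setminus\{i\}$ are bijections.

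Second, I would match terms: setting $\sigma = \tau\circ\pi$ and $k_\ell = \pi(\ell)$, the triples $(k,\pi,\tau)$ correspond bijectively to those pairs $(\sigma,(k_\ell))$ for which $(k_\ell)$ is \emph{injective} (with $k$ being the unique value missing from its image). Hence the partial sum over injective $(k_\ell)$ in $adj(AB)_{i,j}$ coincides with $(adj(B)\,adj(A))_{i,j}$, and I can write
$$adj(AB)_{i,j} \;=\; (adj(B)\,adj(A))_{i,j} \,+\, E_{i,j},$$
where $E_{i,j}$ collects the non-injective terms.

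Third, I would show $E_{i,j} \in G\cup\{-\infty\}$. For each non-injective $(k_\ell)$, select the lexicographically first pair $\ell_1 < \ell_2$ with $k_{\ell_1}=k_{\ell_2}$, and pair the term at $(\sigma,(k_\ell))$ with the term at $(\sigma\circ(\ell_1\ \ell_2),(k_\ell))$. The choice of $(\ell_1,\ell_2)$ depends only on $(k_\ell)$, so this is an involution, and it is fixed-point-free because $\sigma(\ell_1)\ne\sigma(\ell_2)$. Commutativity of $R$ makes the two paired monomials equal to a common value $v$, so each pair contributes $v+v = v^\nu \in G$; summing, $E_{i,j}\in G\cup\{-\infty\}$. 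The decomposition with $E_{i,j}$ ghost is precisely the definition of $adj(AB)_{i,j} \models_{gs} (adj(B)\,adj(A))_{i,j}$, so the entrywise conclusion gives the lemma.

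The main obstacle is the combinatorial bookkeeping around the involution: verifying that the ``first collision'' rule is unambiguous and that $\sigma\circ(\ell_1\ \ell_2)$ still lies in the correct set of bijections (it does, since the codomain $\{1,\dots,n\}\setminus\{i\}$ is unaffected by pre-composition with a transposition of the domain). Beyond this, no supertropical machinery past commutativity of $R$ and closure of $G$ under addition is required.
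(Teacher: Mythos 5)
Your proof is correct. The paper itself only cites the result (to \cite[Proposition 4.8]{STMA}) and gives no argument, so there is no in-paper proof to compare against; your proposal supplies a self-contained combinatorial verification, which is genuinely useful. The key ideas are all sound: expanding $adj(AB)_{i,j}=\det((AB)_{j,i})$ over pairs $(\sigma,(k_\ell))$; observing that the injective choices of $(k_\ell)$ are in canonical bijection with the triples $(k,\pi,\tau)$ indexing $(adj(B)\,adj(A))_{i,j}$, with $k$ the missing value, $\pi=(k_\ell)$, $\tau=\sigma\circ\pi^{-1}$, and with equal monomials on both sides; and then pairing off the non-injective terms by a fixed-point-free involution $(\sigma,(k_\ell))\mapsto(\sigma\circ(\ell_1\,\ell_2),(k_\ell))$ determined solely by the first collision in $(k_\ell)$, so that each pair contributes a ghost. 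The observation that post-composition is irrelevant since the transposition acts on the domain, leaving the codomain $\{1,\dots,n\}\setminus\{i\}$ unaffected, closes the only bookkeeping gap. Since $G\cup\{-\infty\}$ is an ideal and closed under addition, the error $E_{i,j}$ is ghost, which is exactly the definition of $adj(AB)_{i,j}\models_{gs}(adj(B)\,adj(A))_{i,j}$. One small remark: this is the same pairing mechanism that underlies the paper's alternative proof of Theorem~2.10 in Section~4 (there the repeated index arises from a Gaussian elementary matrix rather than from a repeated $k_\ell$), so your argument fits naturally into the paper's toolkit.
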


\noindent \textbf{Proof.} ~\cite[Proposition 4.8]{STMA}.
\vskip 0.5 truecm

Recall the classical Bruhat (LDU) decomposition, whose tropical analog in ~\cite{LDM} is called the LDM decomposition.

\begin{lem}

$ $

\noindent a. If $A$ is a not strictly singular triangular matrix over $\mathbb{T}$ (respectively non-singular over~$R$), then $A$ is elementarily factorizable.

\noindent b. If $A$ is a not strictly singular matrix over $\mathbb{T}$ (respectively non-singular over~$R$), then~$A^{\nabla}$ (respectively  $A^{\nabla\nabla}$) is elementarily factorizable. 
\end{lem}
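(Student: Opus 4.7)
\textbf{Part (a).} I would treat the upper-triangular case; the lower-triangular case is symmetric. Non-singularity over $R$ (respectively the not-strictly-singular hypothesis over $\mathbb{T}$) forces the unique dominant track to be the diagonal, so every $a_{ii}$ is a tangible invertible element. I would first factor $A = D U$, where $D = \mathrm{diag}(a_{11},\ldots,a_{nn})$ is manifestly a product of $n$ diagonal multipliers, and $U$ is unipotent upper triangular with $u_{ij} = a_{ii}^{-1} a_{ij}$ for $i<j$. For the unipotent factor I would exhibit $U$ as a product of Gaussian matrices applied in the following order: first apply all $G_{1\ell}(u_{1\ell})$ for $\ell=2,\ldots,n$, then all $G_{2\ell}(u_{2\ell})$ for $\ell=3,\ldots,n$, and so on, up to $G_{n-1,n}(u_{n-1,n})$. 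The verification rests on one observation: at the moment $G_{k\ell}(u_{k\ell})$ is applied, row~$\ell$ of the accumulated partial product is still the unit vector $e_\ell^{\,t}$, because the only factors that could modify row~$\ell$ are of the form $G_{\ell m}$, and in this ordering all of those come strictly later. Hence $G_{k\ell}$ deposits exactly $u_{k\ell}$ at position $(k,\ell)$ and affects no other slot, and a routine induction confirms that $U$ is produced entrywise with no spurious max-terms.

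\textbf{Part (b).} I would reduce to part~(a) via the LDM decomposition of~\cite{LDM}, the tropical analogue of the Bruhat (LDU) decomposition, which writes a suitable matrix as $L D M$ with $L$ unipotent lower triangular, $D$ diagonal, and $M$ unipotent upper triangular. Over $\mathbb{T}$, the matrix to which the decomposition is applied is $A^{\nabla}$ itself; each of the three factors is triangular and not strictly singular, so part~(a) factors each into elementary matrices, and the concatenation is the desired factorization of $A^{\nabla}$. Over $R$ the same strategy works, but applied to $A^{\nabla\nabla}$: the reason $A^{\nabla}$ itself cannot be used is that elementary matrices are tangible by construction whereas $A^{\nabla}$ over $R$ may carry ghost entries, while $A^{\nabla\nabla}$, living in the set $\mathcal{N}$ of Section~3, has the tangible structure that the LDM decomposition demands.

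\textbf{Main obstacle.} The delicate point in part~(a) is that Gaussian matrices in max-plus are not invertible: multiplying two of them in the wrong order produces an unwanted max-term $u_{k\ell}\, u_{\ell m}$ at position $(k,m)$ that no subsequent factor can remove, so the ordering above has to be followed precisely. For part~(b) the main difficulty is to justify the appeal to~\cite{LDM} and, in particular, to explain why passing to $A^{\nabla\nabla}$ over $R$ is exactly what restores the tangibility of the triangular factors so that they can be produced by Gaussian and diagonal multiplier matrices.
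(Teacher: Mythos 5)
Your part (a) is correct in substance, and it usefully fills in what the paper itself only cites (the paper's proof of this lemma is a pointer to the LDM decomposition of \cite{LDM} and to \cite[Lemma 6.5 and Corollary 6.6]{FTM}). One caution: your key observation about row $\ell$ of the accumulated product being $e_\ell^{\,t}$ is valid only if each new Gaussian factor multiplies the accumulation \emph{on the left}, so that the written product is $G_{n-1,n}\cdots (G_{2n}\cdots G_{23})(G_{1n}\cdots G_{12})$, with the row-$1$ factors rightmost. If one instead forms the product in the written order $G_{12}G_{13}\cdots G_{n-1,n}$, every chain $G_{k\ell}$-before-$G_{\ell m}$ is realized and the $(k,m)$ slot picks up the spurious term $u_{k\ell}u_{\ell m}$; already for $n=3$ the $(1,3)$ entry of $G_{12}(a)G_{13}(b)G_{23}(c)$ is $b+ac$, i.e.\ one obtains the Kleene-star-type closure of $U$ rather than $U$ itself. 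This is precisely the pitfall you flag at the end, so make the side of multiplication (equivalently: eliminate rows bottom-up in the written product) explicit.

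Part (b) has a genuine gap. First, the LDM decomposition of \cite{LDM}, \cite{T&I} is a statement about Kleene stars $A^*$ (equivalently, about definite matrices), not about an arbitrary $A^{\nabla}$; the paper itself uses it in that form (``the LDM factorization of $A^*$'' in Section 3.2). To invoke it you must first reduce to the definite form: write $A=P\bar A$ with $P$ invertible, so that $A^{\nabla}=\bar A^{\nabla}P^{-1}$ by Lemma 3.2(iv), note that over $\mathbb{T}$ one has $\bar A^{\nabla}=\bar A^{*}$ by Proposition 3.7, apply LDM to $\bar A^{*}$, factor the triangular pieces by part (a), and absorb the conductor $P^{-1}$ as a product of transpositions and diagonal multipliers. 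Second, your explanation of why one must pass to $A^{\nabla\nabla}$ over $R$ is not the right mechanism: the paper's Gaussian matrices are not required to have tangible entries, products of tangible elementary matrices can produce ghost entries, and $A^{\nabla\nabla}$ itself need not be tangible --- see Example 3.6(ii), where $A^{\nabla\nabla}$ contains $-1^{\nu}$ and $0^{\nu}$. The actual obstruction is that over $R$ the identities used in the reduction ($\bar A^{\nabla}\cong_{\nu}\bar A^{*}$, $\bar A^{\nabla\nabla}\cong_{\nu}\bar A^{\nabla}$) hold only up to $\nu$-equivalence rather than equality, so an exact factorization of $A^{\nabla}$ is not guaranteed, and it is only after the second application of $\nabla$ that one gets a matrix which genuinely equals a product of elementary matrices; this is the content of \cite[Corollary 6.6]{FTM}, and your sketch would need to reproduce that argument rather than appeal to tangibility.
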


\begin{proof} See the $LDM$ decomposition in ~\cite{LDM}, or an alternative proof in ~\cite[Lemma 6.5 and Corollary 6.6]{FTM}.\end{proof}

One can find a cruder  factorization in ~\cite{TLI}, where sufficient conditions are established for~$trop(AB)=trop(A)trop(B)$, when looking at the tropical structure as the image of a valuation over the field of Puiseux series,  using the classical $LDU$ decomposition over this field. 
In section 3 we will show the connection of definite matrices and factorization to the well known tropical closure operation $*$, studied in ~\cite{KS} and ~\cite{visual}.

\vskip 0.5 truecm

\subsection{Polynomials}

$ $

As one can see in ~\cite{STA}, the polynomials over the tropical structure are rather straightforward to view geometrically.  We notice that the graph of a monomial $a_ix^i\in \mathbb{T}[x]$  is a line, where the power $i$  indicates the slope. Since $\mathbb{T}$ is ordered we may present its elements on an axis, directed rightward, where if $a<b$ then $a$ appears left to $b$ on the~$\mathbb{T}$-axis, for every pair of distinct elements $a,b\in T$. It is now easy to understand that a tropical polynomial $$\sum_{i=0}^n a_ix^i\in R[x]$$ takes the value of the dominant monomial among $a_ix^i$ along the $\mathbb{T}$-axis. That having been said, it is possible that some monomials in the polynomial would not dominate for any $x\in \mathbb{T}$.

\begin{df}
Let $$f(x)=\sum_{i=0}^n a_ix^i\in R[x]$$ be a supertropical polynomial. We call monomials in $f(x)$ that dominate for some~$x\in R$ \textbf{essential}, and monomials in $f(x)$ that do not dominate for any $x\in R$ \textbf{inessential}. We write $$f^{es}(x)=   \sum_{i\in I} a_ix^i\in R[x],$$ where $a_ix^i$ is an essential monomial $ \forall i\in I$, called \textbf{the essential polynomial} of $f$.
 \end{df}

\begin{df}
We say that $b$ is a $k-$\textbf{root} of $a$, for some $k\in \mathbb{N}$, denoted as~$b= \sqrt[k]{a}$, if~$b^k=a$.
\end{df}

\begin{rem}
If $a,b\in T$ and $a^k=b^k$ then $a^k+b^k=(a+b)^k\in G$, and therefore~$a+b\in G$ and $a=b$. That is, the $k$-root of a tangible element is unique.
 \end{rem}

\begin{df}
We call an element $r \in R$ a \textbf{root} of a polynomial $f(x)$ if $$f(r)\models_{gs}0_R.$$
\end{df}

We distinguish between two kinds of roots of supertropical polynomials.

\begin{df} We refer to roots of a polynomial being obtained as an intersection of two leading tangible monomials as \textbf{corner roots}, and to  roots that are being obtained from one leading  ghost monomial as \textbf{non-corner roots}.
\end{df}

\vskip 0.25 truecm

\begin{rem} Suppose $f(x)=\sum a_ix^i \in R[x]$. We specialize to elements $r\in R$, starting with $r$ small and then increasing.

\noindent 1. The constant term $a_0$ and the leading monomial $a_nx^n$  dominate first and last, respectively, due to their slopes. Furthermore, they are the only ones that are \textbf{necessarily} essential in every polynomial.

\noindent 2. The intersection of an essential monomial $a_ix^i$ and the next essential monomial~$a_jx^j$ where $j>i$, is the $i^{th}$ root of $f$ (counting multiplicities), denoted as $\alpha _i=\sqrt[k]{\frac{a_i}{a_j}}$, and is of multiplicity~$k=j-i$. 

\noindent Proof: The monomial $a_ix^i$ dominates all monomials between $a_ix^i$ and $a_jx^j$. Therefore, when $r\in (\alpha_{i-1},\alpha_{i}],$
$$f(r)=a_jr^j+a_ir^i=a_j\left(r^j+\frac{a_i}{a_j}r^i\right)\in G \Rightarrow \left(r^j+\frac{a_i}{a_j}r^i\right)=r^i\left(r^{j-i}+\frac{a_i}{a_j}\right)\in G$$ which means  $(r+\alpha_i)^k\in G$, and therefore $\alpha_i$ is a root of $f$ with multiplicity $k$.

\end{rem}

\vskip 0.25 truecm

\vskip 0.5 truecm

\subsection{Supertropical characteristic polynomials and eigenvalues.}

$ $

 We follow the description  in ~\cite[\S 5]{STMA2}.

\begin{df}
$\forall v \in T^n$ and $A \in M_n(R)$ such that $\exists \alpha \in T\bigcup\{0_R\}$ where~$ Av\models_{gs} \alpha v$, we say that $v$ is a \textbf{supertropical eigenvector}  of $A$ with a \textbf{supertropical eigenvalue}~$\alpha$.

\noindent The \textbf{characteristic polynomial} of $A$ (also called the maxpolynomial) is defined to be~$f_A(x)=det(xI+A)$. The tangible value of the roots of the characteristic polynomial~$f_A$ are the eigenvalues of $A$, as shown in ~\cite[Theorem 7.10]{STMA}. The coefficient of $x^k$ in this polynomial is a sum of the tropical determinants of all $n-k\times n-k$ minors, obtained by deleting $k$ chosen rows of the matrix, and their corresponding columns. These minors are defined as \textbf{principal sub-matrices}. 
\end{df}
The combinatorial motivation for the tropical characteristic polynomial is the Best Principal Submatrix problem, and has been studied by Butkovic in ~\cite{CCP} and ~\cite{ETCP}.

\begin{thm} (Supertropical Hamilton-Cayley) Any matrix $A$ satisfies its characteristic polynomial,  in the sense that $f_A(A)$ is ghost. \end{thm}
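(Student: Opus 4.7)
I would adapt the classical Cayley--Hamilton proof by lifting the adjoint identity to the polynomial matrix semiring $M_n(R[x])$, substituting $x = A$, and exploiting the supertropical self-cancellation $a + a = a^\nu$.

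First, applying the Fact after Definition 2.16 to $xI + A \in M_n(R[x])$ yields the polynomial matrix identity
\[ (xI+A)\, adj(xI+A) \;\models_{gs}\; det(xI+A)\, I \;=\; f_A(x)\, I. \]
Writing $adj(xI+A) = \sum_{k=0}^{n-1} B_k x^k$ with coefficients $B_k \in M_n(R)$ (where $B_{n-1} = I$ since each diagonal entry of the adjoint is a monic polynomial of degree $n-1$ in $x$, and with conventions $B_{-1} = B_n = 0$), and $f_A(x) = \sum_{k=0}^{n} a_k x^k$, I would compare the coefficient of $x^k$ on both sides to obtain, for each $0 \le k \le n$,
\[ B_{k-1} + A B_k \;\models_{gs}\; a_k I. \]
Left-multiplying the $k$-th relation by $A^k$, which preserves $\models_{gs}$ by Property 2, and summing over $k$ gives
\[ \sum_{k=0}^{n} A^k B_{k-1} \;+\; \sum_{k=0}^{n} A^{k+1} B_k \;\models_{gs}\; \sum_{k=0}^{n} a_k A^k \;=\; f_A(A). \]
Reindexing $k \mapsto k+1$ in the first sum identifies it termwise with the second, so the left-hand side collapses to $\bigl(\sum_{k=0}^{n-1} A^{k+1} B_k\bigr)^{\nu}$, a manifestly ghost matrix. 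Thus $f_A(A)$ is $\models_{gs}$-dominated by a ghost matrix.

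The main obstacle is the final step, since $\models_{gs}$-domination by a ghost does not on its own force $f_A(A)$ to be ghost: a tangible entry could in principle be $\nu$-dominated by a ghost entry on the left without being ghost itself. To close this gap I would run the dual computation with the companion identity $adj(xI+A)(xI+A) \models_{gs} f_A(x)I$ and right-multiplication by $A^k$, producing a second ghost matrix that dominates $f_A(A)$ from the other side. Combinatorially, this amounts to showing that every tangible contribution to
\[ (f_A(A))_{ij} \;=\; \sum_{T \subseteq [n]} det\bigl(A|_{[n]\setminus T}\bigr)\,(A^{|T|})_{ij} \]
appears at least twice: each term is indexed by a subset $T$, a permutation $\sigma$ of $[n] \setminus T$, and a length-$|T|$ path from $i$ to $j$, and one pairs it with a second term of equal $\nu$-value by re-routing the path through a cycle of $\sigma$ — exactly the cycle-track domination supplied by Lemma 2.13 on a definite form of $A$. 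Each such pair contributes a ghost summand, so every entry of $f_A(A)$ lies in $G \cup \{0_R\}$.
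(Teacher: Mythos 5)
The paper does not give an original proof here; it cites Straubing's combinatorial Cayley--Hamilton argument and the supertropical version in Izhakian--Rowen. Your algebraic telescoping is a natural idea, and the first part is sound: the identity $(xI+A)\,adj(xI+A) = f_A(x)I + H(x)$, with $H(x)$ a pseudo-zero polynomial matrix (zero diagonal, ghost coefficients off-diagonal), does hold over $R[x]$, and substituting left-powers of $A$ and reindexing does produce a ghost matrix equal to $f_A(A)+\sum_k A^k H_k$. You are right that this is where the argument stalls: over $R$ the equation $(\text{ghost}) = f_A(A) + (\text{ghost})$ carries no information about $f_A(A)$ itself, since a tangible entry can be absorbed by a larger ghost entry without itself being ghost.

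The repair you propose does not close this gap. Running the dual computation with $adj(xI+A)(xI+A)$ and right-multiplication by powers of $A$ produces another relation of exactly the same shape, $(\text{ghost}) = f_A(A) + (\text{ghost})$; the relation $\models_{gs}$ is not a sandwich-type order, so being dominated by ghosts ``from both sides'' still does not force $f_A(A)$ into $G$. At that point you fall back on the combinatorial pairing, which is indeed the correct strategy and is what the paper's citations supply, but your sketch is not a proof: the pairing between a subset $T$, a permutation $\sigma$ of its complement, and a length-$|T|$ walk from $i$ to $j$ has to be an explicit weight-preserving involution without fixed points on the tangible terms, and merely invoking ``re-routing through a cycle of $\sigma$'' together with Lemma~2.13 does not deliver that. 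Lemma~2.13 also only applies to definite matrices, and passing to a definite form $\bar A$ of $A$ changes the characteristic polynomial (and may not be possible at all when $A$ is strictly singular), so $f_A(A)$ cannot be analyzed by that reduction. In short: the telescoping setup is correct but cannot be pushed to the conclusion without a genuine combinatorial involution on permutation/walk pairs, and that involution is precisely the content of Straubing's argument that the paper references rather than a detail you can wave past.
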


\begin{proof} See ~\cite{CH} or ~\cite[Theorem 5.2]{STMA} for a supertropical statement and proof. \end{proof}

\begin{pro}
If $ \alpha \in T\bigcup{0_R}$ is a supertropical eigenvalue of a matrix $A \in M_n(T)$, then $\alpha^i$ is a supertropical eigenvalue of  $A^i$.
\end{pro}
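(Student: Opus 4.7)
The plan is a straightforward induction on $i$, using only the basic properties of $\models_{gs}$ listed early in the paper, namely that it is a partial order (transitivity) and that it is preserved under multiplication. The base case $i=1$ is exactly the hypothesis $Av \models_{gs} \alpha v$.

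For the inductive step, assume $A^{i-1}v \models_{gs} \alpha^{i-1}v$. I would write
\begin{equation*}
A^i v \;=\; A\bigl(A^{i-1}v\bigr).
\end{equation*}
Applying the left-multiplication analogue of property 2 of $\models_{gs}$ (which holds because the ghost ideal is absorbing: if $a = b + \text{ghost}$, then $ca = cb + c\cdot\text{ghost}$ is still a ghost supplement of $cb$, and this extends entrywise to matrix multiplication) to the inductive hypothesis gives
\begin{equation*}
A\bigl(A^{i-1}v\bigr) \;\models_{gs}\; A\bigl(\alpha^{i-1}v\bigr) \;=\; \alpha^{i-1}(Av).
\end{equation*}
Now applying property 2 again to $Av \models_{gs} \alpha v$, this time multiplying the scalar $\alpha^{i-1}$, yields
\begin{equation*}
\alpha^{i-1}(Av) \;\models_{gs}\; \alpha^{i-1}(\alpha v) \;=\; \alpha^i v.
\end{equation*}
Chaining these two surpassing relations through transitivity (property 1, i.e.\ $\models_{gs}$ is a partial order) produces $A^i v \models_{gs} \alpha^i v$, as required. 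The eigenvector exhibiting $\alpha^i$ as a supertropical eigenvalue of $A^i$ is the same $v \in T^n$ that worked for $A$, so the tangibility requirement on the vector is automatic.

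The main thing to get right, and essentially the only subtle point, is the compatibility of $\models_{gs}$ with left multiplication by a matrix: the paper's property 2 is stated for one-sided scalar multiplication, but the entrywise definition of $\models_{gs}$ on matrices together with the absorption of the ghost ideal under addition and multiplication makes the extension immediate. No deeper structural results about $A^{\nabla}$, characteristic polynomials, or definite matrices are needed.
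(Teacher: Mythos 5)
Your proof is correct. The paper itself does not give an argument here --- it simply cites Proposition 3.2 of the author's companion paper \emph{Characteristic Polynomials of Supertropical Matrices}, so there is nothing in this text to compare against line by line. Your argument is the natural self-contained one: induct on $i$, use the base case $Av \models_{gs} \alpha v$, and push the ghost-surpassing relation through one more multiplication by $A$ and by $\alpha$. You correctly identify the one point that needs care, namely that property~2 of $\models_{gs}$ is stated only for scalar multiplication and must be upgraded to left multiplication by a matrix; your justification is sound, since $\models_{gs}$ is additive (if $a_j = b_j + g_j$ with each $g_j$ ghost or $0_R$ then $\sum a_j = \sum b_j + \sum g_j$ and $\sum g_j$ is again ghost or $0_R$) and is preserved by scalar multiplication, and matrix-vector multiplication is built from exactly these two operations entrywise. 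You also correctly observe that the same tangible eigenvector $v \in T^n$ witnesses the conclusion, so no new tangibility check is needed. Note that your argument never uses the hypothesis $A \in M_n(T)$ and in fact establishes the claim for any $A \in M_n(R)$; that is not a flaw, just a sign the hypothesis is carried over from the cited paper for other purposes.
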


\begin{proof}

See ~\cite[Proposition 3.2]{CP}.
\end{proof}

\vskip 0.5 truecm

However, we notice that $\{\alpha^i:\alpha\quad \mbox{is an eigenvalue of a matrix A}\}$ need not be the only supertropical eigenvalues of the matrix $A^i$, as shown in the next example.

\begin{exa}

Consider the $2\times 2$ matrix

$$A=
\left(
\begin{array}{cc}
0 & 0\\
1 & 2
\end{array}
\right).$$

\vskip 0.25 truecm

\noindent{Then $f_A(x)=x^2+2x+2 \Rightarrow f_A(x) \in G\quad \mbox{when}\quad x=0,2$.}

\vskip 0.25 truecm

\noindent{However,}

\vskip 0.25 truecm

$$A^2=
\left(
\begin{array}{cc}
1 & 2\\
3 & 4
\end{array}
\right).$$

\vskip 0.25 truecm

\noindent Thus $f_{A^2}(x)=x^2+4x+5^{\nu} \Rightarrow f_{A^2}(x) \in G \quad \mbox{when}\quad x^{\nu}\leq 1^{\nu} \quad \mbox{or}\quad x=4$.
\end{exa}

\vskip 0.25 truecm

\begin{thm}
Let $A$ be in $M_n(R)$. If $$f_{A}(x)=\sum_{i=0}^n \alpha_ix^i\ \text{ and }\ f_{A^m}(x)=\sum_{i=0}^n \beta _ix^i$$ are the characteristic polynomials  of $A$ and its $m^{th}$ power, respectively, then~$$f_{A^m}(x^m) \models_{gs} f_A(x)^m.$$
\end{thm}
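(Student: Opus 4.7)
The plan is to route the comparison through the polynomial matrix $(xI+A)^m$ and the rule of determinants (Theorem~2.10).

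First, since $xI$ commutes with $A$ in $M_n(R[x])$, expand binomially
\[
(xI+A)^m \;=\; \sum_{k=0}^m \binom{m}{k}\,x^{m-k}\,A^k.
\]
In the supertropical semiring the integer scalar $\binom{m}{k}$ is realized as $\binom{m}{k}\cdot 1_R$; since $1_R+1_R=1_R^\nu$ and $1_R^\nu+1_R=1_R^\nu$, this collapses to $1^\nu\in G$ whenever $\binom{m}{k}\geq 2$, i.e.\ for every $1\leq k\leq m-1$. Hence every middle summand is a ghost matrix and we may write
\[
(xI+A)^m \;=\; x^m I + A^m + G
\]
for some ghost matrix $G\in M_n(R[x])$. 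In particular $(xI+A)^m\models_{gs} x^mI+A^m$ entrywise.

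Second, I would record that the determinant preserves $\models_{gs}$: if $M_1=M_2+G'$ with $G'$ a ghost matrix, then each permutation-track product $\prod_i(M_2+G')_{i,\pi(i)}$ equals $\prod_i M_{2,\,i,\pi(i)}$ plus summands containing at least one ghost factor, hence ghost. Summing over $\pi$ gives $\det M_1 \models_{gs} \det M_2$. Applied here,
\[
\det\!\bigl((xI+A)^m\bigr) \;\models_{gs}\; \det(x^mI+A^m) \;=\; f_{A^m}(x^m).
\]
On the other hand, iterating Theorem~2.10 with the multiplicative compatibility of $\models_{gs}$ (property~2) and transitivity (property~1) yields
\[
\det\!\bigl((xI+A)^m\bigr) \;\models_{gs}\; \det(xI+A)^m \;=\; f_A(x)^m.
\]

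Both $f_{A^m}(x^m)$ and $f_A(x)^m$ are now sandwiched below $\det\!\bigl((xI+A)^m\bigr)$ in the $\models_{gs}$-order, and the desired $f_{A^m}(x^m)\models_{gs} f_A(x)^m$ is read off coefficient by coefficient. At a degree of the form $mk$ the only non-ghost contribution to the $x^{mk}$-coefficient of $f_A(x)^m$ arises from the constant multinomial tuple $(k,\ldots,k)$ and equals $\alpha_k^m$; the relation $\beta_k\models_{gs}\alpha_k^m$ is then the rule of determinants applied at the level of the principal $(n-k)\times(n-k)$ submatrices. At a degree not divisible by $m$, every multinomial index tuple $(k_1,\ldots,k_m)$ has a non-trivial stabilizer orbit (at least two permutations give the same product $\alpha_{k_1}\cdots\alpha_{k_m}$), so the corresponding coefficient of $f_A(x)^m$ is a ghost that is absorbed by the sandwich.

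The main obstacle is precisely this last coefficient-level bookkeeping: one must check that the additional ghost contributions coming from $G$ and from the iterated rule of determinants combine consistently to cover the ghost slack between the two polynomials. In the tracks picture this amounts to matching products of principal minors of $A^m$ with $m$-fold products of principal minors of $A$, where paths of length $m$ that leave and re-enter a submatrix $S$ have to be accounted for on the ghost side. The expansion step and the determinant monotonicity are routine; the combinatorial matching at the end is where the argument has to be done carefully.
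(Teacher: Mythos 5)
Your first three displays are sound: the binomial expansion of $(xI+A)^m$ and the observation that each middle coefficient $\binom{m}{k}\cdot 1_R$ collapses to $1_R^\nu$ give $(xI+A)^m \models_{gs} x^m I + A^m$; the determinant respects $\models_{gs}$ (adding a ghost matrix only inserts ghost factors into permutation tracks); and iterating Theorem~2.10 gives $\det\bigl((xI+A)^m\bigr) \models_{gs} f_A(x)^m$. But the sandwich argument that follows is a genuine logical gap. The relation $\models_{gs}$ is a partial order, and from $c \models_{gs} a$ and $c \models_{gs} b$ one cannot deduce $a \models_{gs} b$: for example $c = 5^\nu$, $a = 3$, $b = 4$ satisfy both premises yet $a \not\models_{gs} b$. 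Your two premises make $\det\bigl((xI+A)^m\bigr)$ a common ghostly majorant of $f_{A^m}(x^m)$ and of $f_A(x)^m$, but they carry no information about how the two lower polynomials relate to each other.

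The closing coefficient-level remarks do not repair this. Even if the rule of determinants gave $\beta_k \models_{gs} \alpha_k^m$, that would not imply $\beta_k$ ghost-surpasses the full $x^{mk}$-coefficient of $f_A(x)^m$, which equals $\alpha_k^m$ plus ghost terms from non-constant multinomial tuples: with $\beta_k = \alpha_k^m = 5$ and a ghost contribution $g = 10^\nu$ one has $\beta_k \models_{gs} \alpha_k^m$ yet $\beta_k \not\models_{gs} \alpha_k^m + g = 10^\nu$. What the theorem demands is a direct combinatorial matching of principal minors of $A^m$ against $m$-fold products of principal minors of $A$, tracking which length-$m$ paths stay inside a given principal submatrix and where repeated tracks produce ghosts; you flag exactly this as ``the main obstacle'' and leave it open. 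The paper itself defers that work to the cited reference on characteristic polynomials of supertropical matrices, and your reduction through the common upper bound $\det\bigl((xI+A)^m\bigr)$, while a natural idea, cannot by itself reach the conclusion.
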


\begin{proof}

See ~\cite[Theorem 3.6]{CP}.
\end{proof}

\begin{cor}

$ $

\noindent a. If  $f_{A^m}\in T[x]$ then equality holds in Theorem 2.31.

\noindent b. Every corner root of $f_{A^m}$ is an $m^{th}$ power of a corner root of $f_A$.
\end{cor}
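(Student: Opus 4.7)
My plan for part (a) is to invoke the elementary supertropical principle that a tangible element ghost-surpassing another element forces equality: if $a \in T \cup \{0_R\}$ and $a \models_{gs} b$, then writing $a = b + g$ with $g \in G \cup \{0_R\}$, any non-zero ghost summand would make $a$ itself ghost, so $g = 0_R$ and $a = b$. Under the hypothesis $f_{A^m} \in T[x]$ every coefficient of $f_{A^m}(x^m)$ lies in $T \cup \{0_R\}$, so applying this principle coefficient-wise to the ghost surpass relation $f_{A^m}(x^m) \models_{gs} f_A(x)^m$ supplied by Theorem~2.31 upgrades it to an equality.

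For part (b), let $\beta$ be a corner root of $f_{A^m}$. By Definition~2.25 there exist $i < j$ such that essential tangible monomials $\beta_i y^i$ and $\beta_j y^j$ of $f_{A^m}$ share a common dominant tangible value at $y = \beta$, so $\beta^{j-i} = \beta_i \beta_j^{-1}$. Because $\mathcal{G}$ is divisible, Remark~2.22 yields a unique tangible $\alpha := \sqrt[m]{\beta}$. After substituting $y = x^m$, the monomials $\beta_i x^{mi}$ and $\beta_j x^{mj}$ are essential in $f_{A^m}(x^m)$ and meet tangibly at $x = \alpha$.

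Applying Theorem~2.31 pointwise, $f_{A^m}(x^m) \models_{gs} f_A(x)^m$ at every $x$. On a punctured neighbourhood of $\alpha$, $f_{A^m}(x^m)$ is tangible, coinciding with whichever of $\beta_i x^{mi}$, $\beta_j x^{mj}$ dominates on that side; by the tangible-surpass principle from part~(a), $f_A(x)^m$ must agree with $f_{A^m}(x^m)$ on that punctured neighbourhood. Consequently $f_A(x)^m$ itself switches between the two essential tangible monomials $\beta_i x^{mi}$ and $\beta_j x^{mj}$ at $x = \alpha$, so $\alpha$ is a corner root of the polynomial $f_A(x)^m$. Finally, by writing a tropical polynomial as a product of linear factors indexed by its corner roots, $f_A(x) = c \prod_l (x + \alpha_l)^{n_l}$ yields $f_A(x)^m = c^m \prod_l (x + \alpha_l)^{m n_l}$, so the corner roots of $f_A(x)^m$ coincide with those of $f_A(x)$. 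Therefore $\alpha$ is a corner root of $f_A$ with $\alpha^m = \beta$.

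The main obstacle I anticipate is the punctured-neighbourhood step of part (b): one must verify rigorously that tangibility of $f_{A^m}(x^m)$ on each side of $\alpha$, combined with the pointwise $\models_{gs}$ from Theorem~2.31, propagates to essential dominance of the same tangible monomial in $f_A(x)^m$. The remaining ingredients---existence of the $m$-th root in the divisible group $\mathcal{G}$ and the identification of corner roots of a tropical product with those of its factors---are standard.
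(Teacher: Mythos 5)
The paper's own ``proof'' is just a citation to \cite[Corollaries~3.7 and~3.10]{CP}, so there is no in-text argument to compare against word-for-word; I therefore assess your proposal on its internal merits.

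Your central idea for part~(a)---that a tangible element ghost-surpassing another must equal it---is the right lever, and your treatment of part~(b) via a local analysis around the corner root is in the spirit of how such statements are established. However, there is one point that needs care. You propose to apply the tangible-surpass principle \emph{coefficient-wise} to the relation $f_{A^m}(x^m)\models_{gs}f_A(x)^m$. Taken literally with Definition~1.1, that relation is in general \emph{false} coefficient-by-coefficient: already for $A=\left(\begin{smallmatrix}0&0\\1&2\end{smallmatrix}\right)$ one finds $f_A(x)^2=x^4+2^\nu x^3+4x^2+4^\nu x+4$ while $f_{A^2}(x^2)=x^4+4x^2+5^\nu$, and $-\infty\models_{gs}2^\nu$ fails. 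Theorem~2.31 must therefore be read at the level of functions (equivalently, of essential polynomials, cf.\ Definition~2.21), and this is exactly the mode of comparison your part~(b) already uses (``pointwise''). If you reformulate part~(a) as: ``for each $r$, $f_{A^m}(r^m)\models_{gs}f_A(r)^m$, and the left side is tangible because all coefficients of $f_{A^m}$ are tangible and no two tangible monomials can tie except at finitely many corner points where ghostness would contradict tangibility of the polynomial evaluated there,'' or simply pass to essential polynomials before comparing, the argument closes cleanly and yields $f_{A^m}(x^m)=\bigl(f_A(x)^m\bigr)^{es}$, which is the correct meaning of ``equality'' here.

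For part~(b), your punctured-neighbourhood reasoning is sound in outline, but you should make explicit the step you flag as the obstacle: on each side of $\alpha$ the function $f_{A^m}(x^m)$ is tangible (because $\beta$ is a \emph{corner} root, so the two meeting monomials are tangible and dominate nearby), the pointwise tangible-surpass then forces $f_A(x)^m$ to agree there, and hence $f_A(x)$ itself must be tangible there with distinct integer slopes on the two sides, which is precisely what it means for $\alpha$ to be a corner root of $f_A$. This makes the detour through the factorisation $f_A=c\prod(x+\alpha_l)^{n_l}$ unnecessary: once you know the piecewise-linear function $f_A$ changes slope at $\alpha$ with tangible values on both sides, $\alpha$ is a corner root directly. (The factorisation claim for arbitrary supertropical polynomials with ghost coefficients is a point you would otherwise have to justify.) Apart from these tightenings, the proposal is a reasonable reconstruction of the cited result.
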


\begin{proof}

See ~\cite[Corollary 3.7 and Corollary 3.10]{CP}.\end{proof}

\vskip 0.5 truecm

\section{The $\nabla$ of a definite matrix}

As shown in ~\cite{FTM} and ~\cite{KS}, reductions to definite matrices can simplify verifications of some complicated properties of matrices. On top of that, matrices of this form   enjoy some  interesting properties of their own.

\begin{rem} Since ghost equivalent implies equality in the tropical setting, we will formulate our results in terms of ghost equivalence (i.e. over $R$), with the understanding that the corresponding tropical equalities (i.e. over $\mathbb{T}$) follow automatically.\end{rem}

\subsection{Stabilization under the $\nabla$ operation}
 
$ $

The goal in this section is to show that $\nabla$ is a closure operation.

\begin{lem}
 
$ $

\noindent (i) $P^\nabla =P^{-1}$ whenever~$P$ is an invertible matrix. 

\noindent (ii) det(PA)=det(P)det(A)=det(AP), for $P$ an invertible matrix.

\noindent (iii) $(PA)^\nabla =A^\nabla P^\nabla$ where~$det(A)$ is invertible and~$P$ is an invertible matrix.

\noindent (iv) Let~$\bar{A}$ be the left definite form of the matrix $A$, i.e. $A=P\bar{A}$ for some invertible matrix $P$. Then~$A^\nabla =\bar{A}^{\nabla}P^{-1}$.
\end{lem}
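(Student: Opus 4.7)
The approach is to handle the four parts in sequence, leveraging the description of invertible matrices from Remark 2.8 as generalized permutations $P = DP_\pi$, with $D = \text{diag}(d_1,\dots,d_n)$ and $P_\pi$ the permutation matrix of some $\pi\in S_n$. The key uniform observation is that such a $P$ has exactly one tangible entry per row and per column, so every permutation-track or minor-track sum involving $P$ can be reindexed bijectively by $\pi$ with no collisions; this is precisely what will upgrade the generic $\models_{gs}$ inequalities of Theorem 2.10 and Proposition 2.19 to the sharp equalities required here.

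For (i), I would evaluate $\text{adj}(P)_{i,j}=\det(P_{j,i})$ directly: the minor obtained by deleting row $j$ and column $i$ admits a nonzero permutation track only when $i=\pi(j)$, and in that case the unique track is $\prod_{k\ne j}d_k$. Dividing by $\det(P)=d_1\cdots d_n$ pins down $P^\nabla$ as the matrix with $d_j^{-1}$ in position $(\pi(j),j)$ and $0_R$ elsewhere, which agrees entry-for-entry with $P_{\pi^{-1}}D^{-1}=P^{-1}$. For (ii), I would expand
$$\det(PA)=\sum_{\sigma\in S_n}\prod_i d_i\,A_{\pi(i),\sigma(i)}=(d_1\cdots d_n)\sum_{\tau\in S_n}\prod_k A_{k,\tau(k)}$$
via the bijective reindexing $\tau(k)=\sigma(\pi^{-1}(k))$, giving $\det(P)\det(A)$ exactly; the case $\det(AP)$ is symmetric.

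For (iii), the same minor analysis yields
$$\text{adj}(PA)_{i,j}=\det((PA)_{j,i})=\Bigl(\prod_{k\ne j}d_k\Bigr)\det(A_{\pi(j),i})=\Bigl(\prod_{k\ne j}d_k\Bigr)\text{adj}(A)_{i,\pi(j)},$$
so that dividing by $\det(PA)=\det(P)\det(A)$ (from (ii)) produces an entry that coincides precisely with $(A^\nabla P^{-1})_{i,j}$ computed from the explicit form of $P^{-1}$ obtained in (i). Part (iv) is then immediate: $A=P\bar{A}$ with $\bar{A}$ definite forces $\det(A)=\det(P)\det(\bar{A})=\det(P)$, which is invertible, so (iii) applies and combined with (i) gives $A^\nabla=\bar{A}^\nabla P^\nabla=\bar{A}^\nabla P^{-1}$.

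The main obstacle I anticipate is precisely the tightening from $\models_{gs}$ to equality in (ii) and (iii); Theorem 2.10 and Proposition 2.19 are a priori too weak, and one must certify that the left-multiplication by a generalized permutation introduces no collisions among tracks and hence no ghost supplements. Once this single structural point is in hand, the bookkeeping in each part is routine and the four statements chain cleanly into (iv), which is the form in which the lemma will actually be consumed by the stabilization results of Theorems 3.5 and 3.6.
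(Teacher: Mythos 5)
Your proposal is correct. It is worth noting how it relates to what the paper actually records as the proof of this lemma: the paper proves (ii) directly but cites the author's companion paper (Niv, \emph{Factorization of tropical matrices}, Lemma~5.7) for parts (i), (iii) and (iv), so for those parts you have supplied a self-contained argument where the paper gives none. For part (ii) there is a genuine methodological difference: the paper works at the level of \emph{elementary} generators --- it verifies $\det(E_{i,j}A)=\det(A)$ for a transposition matrix by pairing $\sigma$ with the permutation $\rho$ that swaps the images of $i$ and $j$, then verifies $\det(E_{\alpha\cdot j}A)=\alpha\det(A)$ for a diagonal multiplier, and finally appeals to induction on a factorization of $P$ into such generators --- whereas you work globally with the generalized-permutation form $P=DP_\pi$ and a single reindexing $\tau(k)=\sigma(\pi^{-1}(k))$, giving the result in one stroke without induction. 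Both are sound; the paper's version isolates exactly what each generator does, while yours makes the bijective-reindexing mechanism (and why no ghost collisions arise) transparent. Your proofs of (i) and (iii) via direct computation of $\mathrm{adj}(P)_{i,j}$ and $\mathrm{adj}(PA)_{i,j}$ from the single nonzero entry per row/column are correct, and (iv) then chains from (i) and (iii) exactly as you say --- with the small caveat that the hypothesis of (iii) you need to check is that $\det(\bar A)$ (not $\det(A)$) is invertible, which holds trivially since $\det(\bar A)=1_R$ for a definite matrix.
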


\begin{proof} For (i), (iii) and (iv) see ~\cite[Lemma 5.7]{FTM}.

\noindent(ii) We recall that for any permutation $\sigma$ we have a corresponding permutation $\rho$ such that $\sigma(i)=\rho(j),\ \sigma(j)=\rho(i)\ \text{and}\ \sigma(k)=\rho(k)\ \forall k\ne i,j.$

As a result, if $P=E_{i,j}$ is a transposition matrix and $A=(a_{i,j})$, then $$det(E_{i,j}A)=\sum_{\sigma} a_{i,\sigma(j)}a_{j,\sigma(i)}\prod_{k\ne j,i}a_{k,\sigma(k)}=\sum_{\rho} a_{i,\rho(i)}a_{j,\rho(j)}\prod_{k\ne j,i}a_{k,\rho(k)}=det(A)=det(E_{i,j})det(A).$$

\noindent  If $P=E_{\alpha\cdot j}$ is a diagonal multiplier and $A=(a_{i,j})$, then $$det(E_{\alpha\cdot j}A)=\sum_{\sigma} \alpha\cdot a_{j,\sigma(j)}\prod_{k\ne j}a_{k,\sigma(k)}=\alpha\sum_{\sigma}  a_{j,\sigma(j)}\prod_{k\ne j}a_{k,\sigma(k)}=\alpha det(A)=det(E_{\alpha\cdot j})det(A).$$ Inductively the claim holds for every invertible matrix $P$.

 \end{proof}

\begin{cla} If $A$ is a definite matrix, then $A^\nabla A\cong_{\nu} A^\nabla\cong_{\nu} AA^\nabla$.\end{cla}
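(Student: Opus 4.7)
The plan is to establish both $\nu$-equivalences by direct entry-wise calculation, exploiting the combinatorial description $A^{\nabla}=adj(A)$ from Remark~2.18 and the expansion~(2.2) of $\det(A_{j,i})$, and then invoking Lemma~2.14 to show that every extra summand produced by matrix multiplication is $\nu$-dominated by a summand already present in the corresponding entry of $A^{\nabla}$. Since $A$ is definite, Remark~2.18 makes $A^{\nabla}$ definite as well, so $a_{k,k}=(A^{\nabla})_{k,k}=1_R$ for every $k$ and $\det(A)=1_R$; in particular every permutation track of $A$ is $\nu$-bounded by $1_R$.

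For the diagonal entry $(A^{\nabla}A)_{i,i}=\sum_k(A^{\nabla})_{i,k}a_{k,i}$, the summand $k=i$ already yields $1_R$, matching $(A^{\nabla})_{i,i}$. For any $k\ne i$, expansion~(2.2) writes $(A^{\nabla})_{i,k}a_{k,i}$ as a sum of products of the form $\bigl(a_{i,\pi(i)}\cdots a_{\pi^{-1}(k),k}\bigr)\,a_{k,i}\,C_{\pi}$, whose left factor is a closed walk at $i$ passing through $k$; Lemma~2.14 $\nu$-dominates this sequence by a permutation track of $A$, hence by $1_R$. Therefore $(A^{\nabla}A)_{i,i}\cong_{\nu}1_R=(A^{\nabla})_{i,i}$.

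For $i\ne j$, the summand $k=j$ of $(A^{\nabla}A)_{i,j}=\sum_k(A^{\nabla})_{i,k}a_{k,j}$ contributes $(A^{\nabla})_{i,j}\cdot 1_R=(A^{\nabla})_{i,j}$, and the summand $k=i$ contributes $1_R\cdot a_{i,j}=a_{i,j}$, which is the summand of $\det(A_{j,i})$ coming from the transposition $(i\,j)$ extended by the identity elsewhere and so already lives inside $(A^{\nabla})_{i,j}$. For $k\ne i,j$ the product $(A^{\nabla})_{i,k}a_{k,j}$ is a sum of terms in which $\bigl(a_{i,\pi(i)}\cdots a_{\pi^{-1}(k),k}\bigr)a_{k,j}$ is an open walk from $i$ to $j$ via $k$; decomposing this walk, in the spirit of Lemma~2.14, into a simple path $i\to\cdots\to j$ together with disjoint closed sub-cycles, and then extending the simple path by the identity on the unused indices, produces a permutation $\pi'$ with $\pi'(j)=i$ whose track is a summand of $(A^{\nabla})_{i,j}$, while the extracted sub-cycles together with $C_{\pi}$ are all $\nu$-bounded by $1_R$. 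Each such summand is therefore $\nu$-dominated by a summand of $(A^{\nabla})_{i,j}$, giving $(A^{\nabla}A)_{i,j}\cong_{\nu}(A^{\nabla})_{i,j}$.

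The companion identity $AA^{\nabla}\cong_{\nu}A^{\nabla}$ is obtained by the mirror-image computation, starting from $(AA^{\nabla})_{i,j}=\sum_k a_{i,k}(A^{\nabla})_{k,j}$ and isolating the $k=i$ term. The main obstacle is precisely the off-diagonal step: one has to verify carefully that the path-extraction argument underlying Lemma~2.14, which is phrased for closed walks, still returns a bona fide summand of $\det(A_{j,i})$ when applied to an open walk from $i$ to $j$. This is a mild but genuinely technical extension of the closed-walk argument, and everything else in the proof is bookkeeping driven by the definiteness of $A$ and $A^{\nabla}$.
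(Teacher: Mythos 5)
Your argument is correct, but it is genuinely different in character from what the paper does: the paper gives no proof at all for Claim 3.3, deferring instead to Reutenauer--Straubing (\cite{MCS}) for general semirings and to \cite[Claim 6.1]{FTM} for the supertropical case, whereas you supply a self-contained entry-wise verification built only from tools already in the paper (Remark 2.18, the expansion (2.2) of $\det(A_{j,i})$, and the cycle-extraction idea behind Lemma 2.14). The heart of your proof is sound: the $k=j$ (resp.\ $k=i$) summand reproduces $(A^{\nabla})_{i,j}$ on the nose because the diagonal of a definite matrix is $1_R$, and every other summand, once written as an open walk from $i$ to $j$ times closed cycle tracks, is $\nu$-dominated by the summand of $\det(A_{j,i})$ coming from the simple path extracted from that walk (extended by the identity on unused indices, which costs nothing since $a_{v,v}=1_R$), while the extracted cycles and $C_{\pi}$ are each $\nu$-bounded by $1_R$ by definiteness. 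The ``technical extension'' you flag at the end is not a real gap: for $i\ne j$ the standard walk decomposition you describe (peel off closed cycles at repeated vertices until a simple $i$-to-$j$ path remains) does exactly what is needed, and the resulting cycle $(i\,v_1\cdots v_m\,j)$ gives a bona fide permutation $\pi'$ with $\pi'(j)=i$, hence a summand of $(A^{\nabla})_{i,j}$. Two small imprecisions worth noting: the diagonal entries of $A^{\nabla}$ need only satisfy $(A^{\nabla})_{k,k}\cong_{\nu}1_R$ (they may be $1_R^{\nu}$ when a nontrivial permutation track of the principal minor also attains $1_R$), which is harmless since the claim is stated up to $\cong_{\nu}$; and on the diagonal you do not really need Lemma 2.14 at all, since $\det(A_{k,i})\,a_{k,i}$ is literally a sum of full permutation tracks of $A$, each $\nu$-bounded by $\det(A)=1_R$. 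What your route buys is a proof internal to the paper's machinery; what the paper's route buys is brevity and the greater generality of the cited semiring results.
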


\begin{proof} See  ~\cite{MCS} for general semirings, or ~\cite[Claim 6.1]{FTM} for the special case of the  supertropical semiring. \end{proof}

\begin{cor} Let $A$ be a matrix with  left definite form $\bar{A}$ and  right definite form~$\tilde{A}$, i.e. $A=P\bar{A}=\tilde{A}Q$ for  invertible matrices $P$ and $Q$. Then

\noindent a. $\bar{A}^{\nabla\nabla} \cong_{\nu} \bar{A}^\nabla\cong_{\nu}I'_A,\ \tilde{A}^{\nabla\nabla}\cong_{\nu}\tilde{A}^{\nabla}\cong_{\nu}I_A$.

\noindent b. $A^{\nabla\nabla} \cong_{\nu} PA^\nabla P$.


\end{cor}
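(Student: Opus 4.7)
The plan handles (a) by two applications of Claim 3.3 (one for $\bar{A}$, one for its adjoint $\bar{A}^\nabla$), and derives (b) from (a) together with Lemma 3.2. First, for the rightmost equivalence $\bar{A}^\nabla \cong_\nu I'_A$ in (a), I substitute the left-definite decomposition and cancel the strict inverse. By Lemma 3.2(iv), $A^\nabla = \bar{A}^\nabla P^{-1}$, whence
\[
I'_A = A^\nabla A = \bar{A}^\nabla P^{-1} P \bar{A} = \bar{A}^\nabla \bar{A},
\]
since $P$ is genuinely invertible and $P^{-1}P = I$ exactly (no ghost). Claim 3.3 on the definite matrix $\bar{A}$ then gives $\bar{A}^\nabla \bar{A} \cong_\nu \bar{A}^\nabla$, so $I'_A \cong_\nu \bar{A}^\nabla$. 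The mirror equivalence $I_A \cong_\nu \tilde{A}^\nabla$ runs the same way, using the right-definite form $A = \tilde{A}Q$ and the row-analog of Lemma 3.2(iv), namely $A^\nabla = Q^{-1} \tilde{A}^\nabla$.

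Second, for the leftmost equivalence $\bar{A}^{\nabla\nabla} \cong_\nu \bar{A}^\nabla$, I use that by Remark 2.18 the matrix $\bar{A}^\nabla = adj(\bar{A})$ is itself definite, so Claim 3.3 applies again, now to $\bar{A}^\nabla$, yielding
\[
\bar{A}^{\nabla\nabla} \bar{A}^\nabla \cong_\nu \bar{A}^{\nabla\nabla} \cong_\nu \bar{A}^\nabla \bar{A}^{\nabla\nabla}.
\]
I would bridge this to $\bar{A}^\nabla$ by multiplying $\bar{A}^\nabla \cong_\nu \bar{A}^\nabla \bar{A}$ on the right by $\bar{A}^{\nabla\nabla}$ and collapsing the resulting triple product $\bar{A}^\nabla \bar{A} \bar{A}^{\nabla\nabla}$ using the idempotency of pseudo-identities and the absorption law $I_G M \models_{gs} M$ from Def.~2.15 (noting that $\bar{A}^\nabla \bar{A}$ is a pseudo-identity by the Fact after Def.~2.16). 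The parallel argument handles $\tilde{A}^{\nabla\nabla} \cong_\nu \tilde{A}^\nabla$.

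For (b), I apply the above reasoning once more, now to $A^\nabla = \bar{A}^\nabla P^{-1}$, using the right-multiplication analog of Lemma 3.2(iii) — derivable by combining Proposition 2.19 with parts (i)--(ii) of Lemma 3.2 — to obtain $A^{\nabla\nabla} = (\bar{A}^\nabla P^{-1})^\nabla \cong_\nu P \bar{A}^{\nabla\nabla}$. Part (a) then gives $\bar{A}^{\nabla\nabla} \cong_\nu \bar{A}^\nabla$, and rearranging Lemma 3.2(iv) as $\bar{A}^\nabla = A^\nabla P$ yields $A^{\nabla\nabla} \cong_\nu P \bar{A}^\nabla = P A^\nabla P$. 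The main obstacle is the bridging step in the second half of (a): Claim 3.3 alone shows $\bar{A}^\nabla$ and $\bar{A}^{\nabla\nabla}$ are each $\nu$-equivalent to a pseudo-identity ($\bar{A}^\nabla \bar{A}$ and $\bar{A}^\nabla \bar{A}^{\nabla\nabla}$, respectively), but these pseudo-identities a priori need not share the same off-diagonal ghost values; matching them $\nu$-wise is exactly what the idempotency and absorption laws of pseudo-identities are designed to accomplish, and this is where the delicate bookkeeping lives.
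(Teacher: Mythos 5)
Your handling of the rightmost equivalences in (a) ($\bar{A}^\nabla \cong_\nu I'_A$ and $\tilde{A}^\nabla \cong_\nu I_A$) is correct and matches the paper exactly: use $A^\nabla = \bar{A}^\nabla P^{-1}$, cancel $P^{-1}P$, and apply Claim 3.3 to the definite matrix $\bar{A}$. Part (b) is also correct and follows the same Lemma 3.2 route the paper takes.

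The gap is exactly where you flag it: the leftmost equivalence $\bar{A}^{\nabla\nabla} \cong_\nu \bar{A}^\nabla$. After multiplying $\bar{A}^\nabla \cong_\nu \bar{A}^\nabla \bar{A}$ on the right by $\bar{A}^{\nabla\nabla}$ and combining with Claim 3.3 on $\bar{A}^\nabla$, what you have is $\bar{A}^{\nabla\nabla} \cong_\nu \bar{A}^\nabla \bar{A}\, \bar{A}^{\nabla\nabla}$, and you still need to show this triple product is $\nu$-equivalent to $\bar{A}^\nabla$. Idempotency of pseudo-identities and the absorption law $I_G M \models_{gs} M$ cannot close this: absorption gives only a one-sided ghost-surpass, not a $\nu$-equivalence, and no two consecutive factors in $\bar{A}^\nabla \bar{A}\, \bar{A}^{\nabla\nabla}$ form the same pseudo-identity to which idempotency could be applied. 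The paper closes the gap with a different, external ingredient you do not invoke: Corollary 4.4 of [STMA2], which states $A^\nabla \cong_\nu A^\nabla A^{\nabla\nabla} A^\nabla$. With that in hand the argument is a clean chain: $\bar{A}^\nabla \cong_\nu \bar{A}^\nabla(\bar{A}^{\nabla\nabla}\bar{A}^\nabla) \cong_\nu \bar{A}^\nabla \bar{A}^{\nabla\nabla} \cong_\nu \bar{A}^{\nabla\nabla}$, where the last two steps are Claim 3.3 applied to the definite matrix $\bar{A}^\nabla$. Without citing that identity, your proof of the left half of (a) — and therefore of (b), which depends on it — is incomplete.
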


\begin{proof}

$ $

\noindent a. According to ~\cite[Corollary 4.4]{STMA2} we know that $A^\nabla \cong_{\nu}A^\nabla A^{\nabla\nabla} A^\nabla$. By applying Claim 3.3 to $\bar{A}^{\nabla}$ we can conclude
$$\bar{A}^\nabla \cong_{\nu}\bar{A}^\nabla (\bar{A}^{\nabla\nabla} \bar{A}^\nabla) \cong_{\nu} \bar{A}^\nabla \bar{A}^{\nabla\nabla} \cong_{\nu} \bar{A}^{\nabla\nabla}.$$ According to Claim 3.3 $$\bar{A}^\nabla\cong_{\nu}\bar{A}^\nabla\bar{A}=\bar{A}^\nabla P^{-1}P\bar{A}=A^\nabla A=I_A',\ \text{and}\ \tilde{A}^\nabla\cong_{\nu}\tilde{A}\tilde{A}^\nabla= \tilde{A}QQ^{-1}\tilde{A}^\nabla= AA^\nabla=I_A$$

\noindent b. By Lemma 3.2 we have $$A^{\nabla\nabla}=(P\bar{A})^{\nabla\nabla}=P\bar{A}^{\nabla\nabla}\cong_{\nu} P\bar{A}^{\nabla}=PA^\nabla P.$$



\end{proof}

\begin{thm} 
We denote  the application  $k$ times of $\nabla$ to $A$ as $A^{\nabla^{(k)}}$. Then  $$A^{\nabla^{(k)}}\cong_{\nu}A^{\nabla^{(k+2)}}\ \forall  k\geq 1.$$  That is,  applying $\nabla$  to $A^\nabla$ is an operator of order 2, which means that the $\nabla$ operation on the set $\{A^\nabla:A\in M_n(R)\}$ acts like the inverse operation.
\end{thm}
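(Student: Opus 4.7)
My plan is to reduce to the definite case via the factorization $A = P\bar{A}$ from Definition 2.11 (where $P$ is invertible and $\bar{A}$ is definite), establish an explicit alternating formula for $A^{\nabla^{(k)}}$ in terms of $P$ and $\bar{A}^{\nabla^{(k)}}$, and then invoke Corollary 3.4(a) to stabilize the inner factor.

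First I would record two preliminary observations. One is that $\nabla$ commutes with transpose, since $\det(M^T)=\det(M)$ and $adj(M^T)=adj(M)^T$; consequently Lemma 3.2(iii),(iv) admit right-factor duals, in particular $(\tilde{A}Q)^\nabla = Q^{-1}\tilde{A}^\nabla$ whenever $\tilde{A}$ is definite and $Q$ is invertible. The other is that $\nabla$ respects $\cong_\nu$: entrywise sums and products preserve $\cong_\nu$, and the ghost-corrected reciprocal $\bigl(1/\widehat{\det(A)}\bigr)^\nu$ depends only on the $\nu$-class of $\det(A)$.

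Next I would prove by induction on $k\geq 1$ the alternating formula
\[
A^{\nabla^{(k)}} \;=\; \begin{cases} \bar{A}^{\nabla^{(k)}}\, P^{-1}, & k \text{ odd},\\ P\,\bar{A}^{\nabla^{(k)}}, & k \text{ even}. \end{cases}
\]
The base case $k=1$ is exactly Lemma 3.2(iv). The inductive step alternates between Lemma 3.2(iv) (when the invertible factor sits on the left) and its right-factor dual recorded above (when the invertible factor sits on the right); Remark 2.19 applied iteratively ensures that every $\bar{A}^{\nabla^{(k)}}$ is itself definite, so either of these product-$\nabla$ formulas actually applies. Finally, iterating Corollary 3.4(a) yields $\bar{A}^{\nabla^{(k)}}\cong_\nu\bar{A}^\nabla$ for all $k\geq 1$, so in particular $\bar{A}^{\nabla^{(k+2)}}\cong_\nu\bar{A}^{\nabla^{(k)}}$. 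Since $k$ and $k+2$ have the same parity, the alternating formula then yields $A^{\nabla^{(k+2)}}\cong_\nu A^{\nabla^{(k)}}$, as claimed.

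The main obstacle I anticipate is confirming that iterated $\nabla$s of a definite matrix remain definite — without this the alternating formula would fail at some step. Remark 2.19 delivers one iteration, and an inductive application (each iterate is the pseudo-inverse of the previous definite matrix) closes the gap. After that, the rest is bookkeeping with Lemma 3.2 and Corollary 3.4.
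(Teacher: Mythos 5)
Your proposal is correct and takes essentially the same route as the paper: the paper likewise writes $A=P\bar{A}$ and computes $(A^{\nabla})^{\nabla\nabla}=(\bar{A}^{\nabla}P^{-1})^{\nabla\nabla}=(P\bar{A}^{\nabla\nabla})^{\nabla}\cong_{\nu}(P\bar{A}^{\nabla})^{\nabla}=\bar{A}^{\nabla\nabla}P^{-1}\cong_{\nu}\bar{A}^{\nabla}P^{-1}=A^{\nabla}$ via Lemma 3.2 and Corollary 3.4(a), settling general $k$ by induction. Your alternating formula and explicit remarks (the right-factor dual of Lemma 3.2, definiteness of the iterates via Remark 2.19, and $\nabla$ respecting $\cong_{\nu}$) merely spell out steps the paper leaves implicit.
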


\begin{proof}
In the same way as in Corollary 3.4 (b) we see that $$(A^{\nabla})^{\nabla\nabla}=(\bar{A}^{\nabla}P^{-1})^{\nabla\nabla}=(P\bar{A}^{\nabla\nabla})^\nabla\cong_{\nu}(P\bar{A}^{\nabla})^{\nabla}=\bar{A}^{\nabla\nabla}P^{-1}\cong_{\nu}\bar{A}^{\nabla}P^{-1}=A^{\nabla}.$$ The general case then follows inductively.
\end{proof}

In summary, $\nabla\nabla$ is a sort of closure operation, which yields elementarily factorizable matrices.

\vskip 0.25 truecm

\begin{exa}

$ $

\noindent (i) For every non-singular $2\times 2$ matrix 
$$A=
\left(
\begin{array}{cc}
a & b\\
c & d
\end{array}
\right)$$ we have that 
$$A^\nabla=det(A)^{-1}
\left(
\begin{array}{cc}
d & b\\
c & a
\end{array}
\right)$$ and
$$A^{\nabla\nabla}=
\left(
\begin{array}{cc}
a & b\\
c & d
\end{array}
\right)=A,$$ imply that the formula in Theorem 3.5~holds in the $2\times 2$ case for all $k\geq 0$. We conclude that~$A$ is elementarily factorizable.
\vskip 0.25 truecm

\noindent (ii) Let 
$$A=
\left(
\begin{array}{ccc}
0 & 0& -\\
- & 0 & 0\\
1 & - & 0
\end{array}
\right),$$ (where $-$ denotes $-\infty$).

Thus 
 $$A^\nabla=
\left(
\begin{array}{ccc}
-1 & -1 & -1\\
0 & -1 & -1 \\
0 & 0 & -1
\end{array}
\right),$$$$
A^{\nabla\nabla}=
\left(
\begin{array}{ccc}
0 & 0 & -1^\nu\\
0^\nu & 0 & 0 \\
1 & 0^\nu & 0
\end{array}
\right),\text{ which  is different than } A^\nabla, $$

$$A^{\nabla^{(3)}}=
\left(
\begin{array}{ccc}
-1 & -1 & -1\\
0 & -1 & -1 \\
0 & 0 & -1
\end{array}
\right)=A^\nabla\text{ and  therefore equal }A^{\nabla^{(2k-1)}}\ \forall k\in \mathbb{N},$$ and
$$A^{\nabla^{(4)}}=
\left(
\begin{array}{ccc}
0 & 0 & -1^\nu\\
0^\nu & 0 & 0 \\
1 & 0^\nu & 0
\end{array}
\right)=A^{\nabla\nabla}\text{ and  therefore equal }A^{\nabla^{(2k)}}\ \forall k\in \mathbb{N}.$$

\end{exa}
\vskip 0.5 truecm

\subsection{The  closure operation $*$ and   power stabilization}

$ $

Noticing that $A^{\nabla}$ arises from supertropical algebraic considerations (as its product with $A$ gives a pseudo-identity), we would like to make a connection to the familiar tropical concept of the Kleene star, denoted as $A^*$, which has been widely studied since the~60's. In the next theorem and proposition we give  results relevant to ~\cite{YL} and ~\cite{KS}, obtained over the tropical setting, with the understanding that tropical equality implies supertropical ghost-equivalent. Since the definition for $A^*$  requires that~$det(A)$ is bounded by $1_R$,  we  consider here the special case of definite matrices. According to Lemma~2.20(a), we can conclude from the LDM factorization of $A^*$ in ~\cite{T&I} that $A^*$ is elementarily factorizable. 


\begin{thm} (See ~\cite[Theorem 2]{YL}) If~$A$ is an $n\times n$ definite matrix  and~$k$ is  a natural number,  then~$A^k\cong_{\nu}A^{k+1},\ \forall k\geq n-1$.

\end{thm}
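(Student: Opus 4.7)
The strategy is to interpret matrix powers combinatorially as walk weights in the weighted complete digraph on $\{1,\dots,n\}$ with edge weights $a_{p,q}$, and to exploit the two hallmarks of a definite matrix: identity loops $a_{v,v}=1_R$ at every vertex, and cycles of weight at most $1_R$. For every $k\geq 1$,
\[
(A^k)_{i,j} \;=\; \sum_{(i=i_0,i_1,\dots,i_k=j)} a_{i_0,i_1}a_{i_1,i_2}\cdots a_{i_{k-1},i_k},
\]
so the $\nu$-value $(A^k)^\nu_{i,j}$ is the maximum weight over all length-$k$ walks from $i$ to $j$ in this digraph.

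The plan is then to establish both directions of inequality between $(A^k)^\nu_{i,j}$ and $(A^{k+1})^\nu_{i,j}$ once $k\geq n-1$. The inequality $(A^k)^\nu_{i,j}\leq (A^{k+1})^\nu_{i,j}$ is immediate for every $k\geq 1$: given any length-$k$ walk from $i$ to $j$, insert the identity loop at its initial vertex $i$; since $a_{i,i}=1_R$ by definiteness, this produces a length-$(k+1)$ walk from $i$ to $j$ of the same weight. For the reverse direction, take any length-$(k+1)$ walk $w$ from $i$ to $j$. Since $k+1\geq n$, the walk lists $k+2\geq n+1$ vertices (with repetition), so by pigeonhole some vertex $v$ appears twice, and $w$ decomposes into a shorter walk $w'$ from $i$ to $j$ concatenated with a closed cycle $C$ at $v$. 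By Lemma 2.13, the cycle track $C$ is dominated by a permutation track of $A$; and every permutation track of a definite matrix has weight at most $\det(A)=1_R$, whence $\mathrm{weight}(w)=\mathrm{weight}(w')\cdot\mathrm{weight}(C)\leq\mathrm{weight}(w')$. Padding $w'$ back out to length $k$ by inserting identity loops preserves its weight (by the first direction), yielding a length-$k$ walk from $i$ to $j$ of weight at least that of $w$. Taking the maximum over all such $w$ gives $(A^{k+1})^\nu_{i,j}\leq (A^k)^\nu_{i,j}$, and combining the two inequalities entry-wise yields $A^k\cong_\nu A^{k+1}$ for every $k\geq n-1$.

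\noindent\textbf{Main obstacle.} The delicate point is the cycle bound: a cycle of length strictly less than $n$ is not itself a permutation track, so one cannot directly invoke $\det(A)=1_R$ to control its weight. Lemma 2.13 is exactly the tool bridging this gap, by embedding any cycle track in a full permutation track of no smaller weight via identity loops on the missing vertices. Apart from this bound, both the loop insertion and the pigeonhole decomposition are routine, and the exponent threshold $n-1$ is sharp precisely because pigeonhole first forces a repeated vertex at walk length $n$.
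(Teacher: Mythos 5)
Your argument is correct. Note that the paper does not actually prove this statement: it is quoted from Yoeli (\cite{YL}, Theorem 2), and the only justification appearing in the text is the parenthetical sketch inside the proof of Proposition 3.7, namely that because of the unit diagonal each entry can only increase from one power to the next and must stabilize by the $(n-1)^{st}$ power. Your walk-theoretic proof supplies exactly the details behind that sketch, and it is consistent with the paper's own toolkit: the inequality $(A^k)^\nu_{i,j}\leq (A^{k+1})^\nu_{i,j}$ comes from inserting the loop $a_{i,i}=1_R$, and the reverse inequality for $k\geq n-1$ comes from pigeonhole extraction of a closed subwalk, whose weight you bound by $1_R$ via Lemma 2.13 together with $det(A)=1_R$. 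One point worth making explicit (and you implicitly rely on it) is that the extracted closed portion of the walk need not be a simple cycle; this is precisely why Lemma 2.13 is stated for closed sequences with possibly repeated indices, so its use here is legitimate, and alternatively one can iterate the extraction until the remaining walk has length at most $k$. Your closing remark about sharpness of the threshold $n-1$ is not part of the claim and only explains why the pigeonhole step needs $k+1\geq n$; that is harmless. In short, the proposal is a correct, self-contained proof of a result the paper delegates to \cite{YL}, using the same underlying mechanism (monotonicity from the diagonal plus nonpositive cycle weights) that Yoeli's theorem and the paper's sketch rest on.
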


\begin{pro}
$A^{\nabla}\cong_{\nu}A^*\cong_{\nu}A^{n-1}$ when $A$ is definite.  

\end{pro}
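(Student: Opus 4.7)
The plan is to establish the two ghost-equivalences $A^{\nabla}\cong_{\nu} A^{n-1}$ and $A^{n-1}\cong_{\nu}A^{*}$ separately, using that $A$ is definite throughout. Before starting, I would record the fact that a definite matrix satisfies $A\geq I$ \emph{entrywise} (the diagonal entries equal $1_{R}$ and off-diagonal entries dominate $-\infty$), and that any cycle track of $A$ has tropical value at most $1_{R}$, so that the product $C_{\pi}$ of any collection of disjoint cycle tracks is also bounded by $1_{R}$.

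For the equivalence $A^{n-1}\cong_{\nu}A^{*}$, I would first observe that multiplying by $A$ from the left is monotone, so $A^{k+1}=A\cdot A^{k}\geq I\cdot A^{k}=A^{k}$, whence $A^{n-1}$ entrywise dominates $I,A,\dots,A^{n-2}$. Theorem~3.7 then stabilises the higher powers as $A^{k}\cong_{\nu}A^{n-1}$ for all $k\geq n-1$, so the infinite sum $A^{*}=I+A+A^{2}+\cdots$ reduces to $I+A+\cdots+A^{n-1}$ and each summand is $\nu$-dominated by $A^{n-1}$. Since dominance plus possible coincidence of tangible values only produces ghost supplements, the whole sum is $\nu$-equivalent to $A^{n-1}$.

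For $A^{\nabla}\cong_{\nu}A^{n-1}$ I would compute entrywise. By Remark~2.18, $A^{\nabla}=\operatorname{adj}(A)$, so $(A^{\nabla})_{ij}=\det(A_{j,i})$, and formula~(2.2) expresses this as
\[
(A^{\nabla})_{ij}=\sum_{\pi:\,\pi(j)=i}\bigl(a_{i,\pi(i)}\cdots a_{\pi^{-1}(j),j}\bigr)\,C_{\pi},
\]
where the first factor is a simple path from $i$ to $j$ in the weighted digraph of $A$ and $C_{\pi}$ is a product of disjoint cycle tracks on the remaining vertices. Since $C_{\pi}\leq 1_{R}$ with equality realised by the identity on the remaining indices (their diagonal entries equal $1_{R}$), the $\nu$-dominant contribution to $(A^{\nabla})_{ij}$ is precisely the maximum weight of a simple path $i\to j$. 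On the other hand $(A^{n-1})_{ij}$ is the sum over walks of length $n-1$ from $i$ to $j$; by Lemma~2.13 every such walk decomposes into a simple path plus attached cycles of weight $\leq 1_{R}$, so no walk exceeds the best simple path, while every simple path $i\to j$ of length $\ell\leq n-1$ can be padded by $n-1-\ell$ self-loops (each of weight $1_{R}$) to a walk of length exactly $n-1$ of the same weight. This gives $(A^{n-1})_{ij}\cong_{\nu}\max_{p:i\to j\text{ simple}}\operatorname{wt}(p)\cong_{\nu}(A^{\nabla})_{ij}$.

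The substantive obstacle is bookkeeping the difference between tropical equality and $\nu$-equivalence: whenever the maximum simple-path weight is attained by two distinct paths (or a path and a self-loop padding coincide with a path appearing through another permutation), the entry becomes ghost on one side and must be shown to be ghost on the other as well. This is exactly where the combinatorial content of Lemma~2.13 and the inequality $C_{\pi}\leq 1_{R}$ are used in unison, and where one must be careful that the diagonal entries $(A^{n-1})_{ii}$, $(A^{\nabla})_{ii}$, and $(A^{*})_{ii}$, all $\nu$-equivalent to $1_{R}$, may independently turn ghost; the statement is phrased in terms of $\cong_{\nu}$ precisely to accommodate this.
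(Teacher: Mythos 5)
Your proposal is correct, but it takes a genuinely different route from the paper on the first equivalence. The paper's proof is essentially a citation: $A^{\nabla}\cong_{\nu}A^{*}$ is quoted from Yoeli (\cite[Theorem 4]{YL}), and $A^{*}\cong_{\nu}A^{n-1}$ is obtained exactly as you do it, from monotonicity of the powers of a definite matrix ($A\geq I$ entrywise) together with the stabilization Theorem 3.6, so the infinite sum defining $A^{*}$ collapses at the $(n-1)$st power. What you do differently is to replace the citation by a self-contained entrywise argument: using Remark 2.18 ($A^{\nabla}=\operatorname{adj}(A)$ for definite $A$), formula (2.2), and the bound $C_{\pi}\leq 1_{R}$ you identify the $\nu$-value of $(A^{\nabla})_{ij}$ with the maximal weight of a simple path $i\to j$, and via Lemma 2.13 plus padding by the weight-$1_{R}$ diagonal loops you identify $(A^{n-1})_{ij}$ with the same quantity. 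This is in effect a proof of Yoeli's theorem in the present language; it buys self-containedness and makes the combinatorial content (maximal-weight simple paths) explicit, at the cost of length, whereas the paper's version is shorter and defers to the classical literature. One remark: your closing ``substantive obstacle'' about ghost bookkeeping is not actually an obstacle at all, and you half-recognize this. Since $\cong_{\nu}$ compares only the $\nu$-values $a^{\nu}=b^{\nu}$, it suffices to show that the underlying tropical maxima coincide on each entry; you never need to show that an entry which is ghost on one side is ghost on the other, so that paragraph can be deleted and the proof stands as is.
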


\begin{proof}
For $A^{\nabla}\cong_{\nu}A^*$ see ~\cite[Theorem 4]{YL}. As described in ~\cite{KS}, the equivalence to $A^{n-1}$ is immediate from the definition of $A^*$ for a definite matrix $A$: \begin{equation}A^*=\sum_{i\in \mathbb{N}\bigcup{0}} A^i.\end{equation} Indeed, according to ~\cite[Theorem 2]{YL} we have that
$$\widehat{A}\leq \widehat{A}^2\leq...\leq\widehat{A}^{n-1}\cong_{\nu}A^n\cong_{\nu}...$$ (due to the  diagonal, each position can only increase comparing to the corresponding position in the previous power and the value of each entry will stabilize at  power $n-1$ at most).\end{proof} 

Combining Corollary 3.4, Theorem 3.6 and Proposition 3.7  we get $$A^*\cong_{\nu}A^{n-1}\cong_{\nu}A^{\nabla}\cong_{\nu}A^{\nabla\nabla}\cong_{\nu}I_A,$$ for a definite matrix $A$ of order $n$.

\vskip 0.5 truecm

\section{An alternative proof of Theorem 2.10: $det(AB)\models_{gs} det(A)det(B)$}

The proof of Theorem 2.10  is given by means of a multilinear function. In the classical case there exists an alternative, somewhat  easier proof, using the factorization of non-singular matrices into the product of elementary matrices. Due to Lemma 2.20 we can now give a tropical  analog of this proof.

If~$A$ or~$B$ are strictly singular (without lost of generality~$B$), then according to ~\cite[Theorem 6.5]{STMA} the columns of~$B$, denoted as~$C_1,...,C_n$, are tropically dependent:~$$\sum \alpha_i C_i\in G\bigcup{0_R},\ \alpha_i\in T\forall i.$$ Denoting the rows of $A$ as $R_1,...,R_n$ and the columns of $AB$ as $c_1,...,c_n$ we get $$\sum \alpha_i c_i=
\left(
\begin{array}{ccc}
\sum \alpha_i R_1C_i\\ 
\vdots\\
 \sum \alpha_i R_nC_i
\end{array}\right)
\in G_{0_R}.$$ Therefore $det(AB)\in G\bigcup{0_R}$ as required.

In order to obtain the $\nabla$ operation for (not-strictly) singular matrices we work over the tropical semifield and will prove that $det(AB)$ equals $det(A)det(B)$ plus a term that is  attained twice. 

Using Theorem 2.17 we have that $det(A^\nabla)=det(A)^{-1}$, and it  suffices to prove the theorem for $A^\nabla ,B^\nabla$. By Lemma 3.2(ii), if $A$ or $B$ are  transposition  matrices  and diagonal multipliers then the theorem holds with equality. Due to Lemma 2.20(b) we only have to show the impact of Gaussian matrices. 

Let $E=E_{row\ i+\alpha\cdot row\ j}$ be a Gaussian matrix adding row $j$, multiplied by $\alpha$, to row~$i$. We denote the rows of $A$ as $R_1,...,R_n$. Then the rows of $EA$ are the same as $A$'s, except for the $i^{th}$ row which is $R_i+\alpha R_j$, therefore \begin{equation}det(EA)=\sum_{\sigma\in S_n}a_{1,\sigma(1)}\cdots a_{i,\sigma(i)}\cdots a_{n,\sigma(n)}+\sum_{\sigma\in S_n}a_{1,\sigma(1)}\cdots \alpha a_{j,\sigma(i)}\cdots a_{n,\sigma(n)},\end{equation} where the sum on the left is $det(A)$. In order to analyze  the sum on the right, we note that \begin{equation}\forall i,j\ \exists ! \sigma,\rho\in S_n:\ \sigma(i)=\rho(j) , \sigma(j)=\rho(i),\ \text{and}\ \sigma(k)=\rho(k)\ \forall k\ne i,j,\end{equation} and let $E$ fix $i$ and $j$. Thus 

\begin{equation}det(EA)=det(A)+\end{equation}$$\sum_{\sigma,\rho\in S_n}(a_{1,\sigma(1)}\cdots \alpha a_{j,\sigma(i)}\cdots a_{j,\sigma(j)}\cdots a_{n,\sigma(n)}+a_{1,\sigma(1)}\cdots  a_{j,\rho(j)}\cdots \alpha a_{j,\rho(i)}\cdots a_{n,\sigma(n)}),$$ for $\sigma, \rho$ as in (4.2). Therefore the sum is  attained twice, as required. If $E$ is multiplied on the right, then the proof is  analogous using column operation instead of row operations.

As a result we have 
$$|AB|=|(AB)^\nabla|^{-1}\models_{gs} |B^\nabla A^\nabla|^{-1}=|P_1^{-1}\bar{B}^\nabla \bar{A}^\nabla P_2^{-1}|^{-1}=|P_1||\bar{B}^\nabla \bar{A}^\nabla|^{-1}|P_2|\models_{gs}$$$$ |P_1||\bar{B}^\nabla|^{-1}| \bar{A}^\nabla|^{-1}|P_2|=|P_1^{-1}\bar{B}^\nabla|^{-1}| \bar{A}^\nabla P_2^{-1}|^{-1}=|B^\nabla|^{-1}|A^\nabla|^{-1}=|A||B|,$$ where $P_1,P_2$ are the right conductor of $B$ and left conductor of $A$ respectively, and as shown in Lemma 2.20, $\bar{A}$ and $\bar{B}$ are products of Gaussian matrices, for which the theorem holds according to (4.3).

\vskip 0.25 truecm

\section{Tropically conjugate and similar matrices}

Although our factorization results for $A^\nabla$ follow from those for $A^*$, the $\nabla$ operation has additional applications in representation theory.

\begin{df} A  matrix $B$ is \textbf{tropically conjugate} to $B'$ if there exists a non-singular matrix $A$ such that \begin{equation}A^{\nabla}BA\models_{gs} B'.\end{equation} If equality holds in (5.1) then we say that $B$ is \textbf{tropically similar} to $B'$.\end{df}

\begin{rem} If $B$ is tropically similar to $B'$, then

\noindent a. $det(B')\models_{gs} det(B)$, with equality when $B'$ is non-singular.

\noindent b. $tr(B')\models_{gs} tr(B)$.

\end{rem}
\vskip 0.25 truecm

These relations can be seen by direct computation, but are also a consequence of Theorem 5.4 below.
\vskip 0.15 truecm

Following the next motivating example, we have an analog to the  theorem on similar matrices in classical algebra.
\vskip 0.25 truecm

\begin{exa}

$ $
\vskip 0.25 truecm

\noindent 1. Let $$A=\left(
\begin{array}{cc}
2 & 0\\
1 & 0
\end{array}
\right),\ B=\left(
\begin{array}{cc}
1 & 2\\
3 & 1
\end{array}
\right),\ B'=\left(
\begin{array}{cc}
3 & 1^\nu\\
5 & 3
\end{array}
\right).$$ We have $A^\nabla BA=B'$,  and therefore $f_{B'}(x)=x^2+3^{\nu}x+6^{\nu}\models_{gs} f_{B}(x)=x^2+1^{\nu}x+5$. As a result all supertropical eigenvalues of $B$ are supertropical eigenvalues of $ A^\nabla BA$.
\vskip 0.5 truecm

\noindent 2. By taking $B=\left(\begin{array}{cc}0 & 0 \\ 1 & 2 \end{array}\right),\ B'=\left(\begin{array}{cc}1 & 3 \\ 1 & 2 \end{array}\right), A=\left(\begin{array}{cc}0 & 1^{\nu} \\ -\infty & 0 \end{array}\right)$ we get $$A^{\nabla}BA=\left(
\begin{array}{cc}
2^\nu & 3^\nu\\
1 & 2^\nu
\end{array}
\right)\models_{gs} B'.$$ However, $|B|=2\ \text{and}\ |B'|=4$ do not ghost-surpass each other, which shows that there need not be a ghost surpassing relation between the characteristic polynomials of tropically  conjugate matrices.
\end{exa}

\vskip 0.25 truecm

\begin{thm}
If $B$ is tropically similar to $B'$, then $f_{B'}\models_{gs} f_B$. 

\end{thm}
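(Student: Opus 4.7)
The plan is to mimic the classical identity $\det(xI + A^{-1}BA) = \det(A^{-1}(xI+B)A) = \det(xI+B)$, with $A^{\nabla}$ in place of $A^{-1}$ and careful tracking of the ghost slack that arises because $A^{\nabla}A$ is only a pseudo-identity rather than $I$ itself.

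First I would reduce to the case in which $A$ is definite. Using Lemma~3.2(iv), write $A=P\bar{A}$ with $P$ invertible and $\bar{A}$ the left definite form of $A$, so that $A^{\nabla}=\bar{A}^{\nabla}P^{-1}$ and $A^{\nabla}BA = \bar{A}^{\nabla}(P^{-1}BP)\bar{A}$. Since $P$ is invertible, Lemma~3.2(ii) makes the rule of determinants an equality on the nose, and therefore $f_{P^{-1}BP}(x) = \det(P^{-1}(xI+B)P) = f_B(x)$. It thus suffices to prove the theorem for $A$ definite. For such $A$, the key identity is $A^{\nabla}(xI+B)A = xA^{\nabla}A + A^{\nabla}BA = xI'_A + B'$, with $I'_A=A^{\nabla}A$ a pseudo-identity. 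Applying Theorem~2.10 twice and using $\det(A^{\nabla})\det(A)=1_R$ (a direct consequence of Theorem~2.17) yields $\det(xI'_A+B') = \det(A^{\nabla}(xI+B)A) \models_{gs} f_B(x)$.

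I would then expand $\det(xI'_A+B')$ as a sum over permutation tracks. Each off-diagonal entry equals $B'_{i,j}+x(I'_A)_{i,j}$ with $(I'_A)_{i,j}$ ghost; distributing the products over each permutation track, the terms in which no ghost summand is ever chosen reassemble exactly $f_{B'}(x)$, while every other term is ghost. Hence $\det(xI'_A+B') = f_{B'}(x)+\Gamma(x)$ for some ghost polynomial $\Gamma(x)\in R[x]$, and combined with the previous display, $f_{B'}(x)+\Gamma(x) \models_{gs} f_B(x)$.

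The main obstacle will be extracting the coefficient-wise surpassing $c_k^{B'} \models_{gs} c_k^B$ from this single sum relation. A priori the ghost coefficient $\Gamma_k$ might be large enough to carry the surpassing of $c_k^B$ by itself, even if $c_k^{B'}$ were a strictly smaller tangible scalar, so this pathology must be ruled out. To close the gap I would invoke Lemma~2.13: for definite $A$ every closed-walk product in $A$ is dominated by the identity track $1_R$, so each ghost entry of $I'_A$ has $\nu$-value at most $1_R$. Combinatorially, every monomial of $\Gamma_k$ corresponds to a permutation track of $B'$ in which some factors $B'_{i,\sigma(i)}$ have been replaced by the ghost entries $x(I'_A)_{i,\sigma(i)}$, i.e.\ by a short cycle through $A$ inserted into the track; Lemma~2.13 identifies this cycle, up to value, with a sub-permanent already summed into $c_k^{B'}$. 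Consequently $\Gamma_k$ is dominated in value by $c_k^{B'}$, so $c_k^{B'}+\Gamma_k$ has the same $\nu$-value as $c_k^{B'}$ itself, and the surpassing of $c_k^B$ transfers from the sum back to $c_k^{B'}$ alone, completing the proof.
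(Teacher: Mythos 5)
Your strategy — rewrite $A^{\nabla}(xI+B)A = xI'_A + B'$, apply Theorem 2.10 twice to get $\det(xI'_A+B')\models_{gs}f_B$, and then peel off the ghost coming from $I'_A$ — is genuinely different from the paper's proof, which instead expands each coefficient $\beta'_k$ of $f_{B'}$ directly (using $A^\nabla=A^*=A^{n-1}$ for definite $A$) and shows by a long case analysis of permutation-track decompositions that every summand either equals a summand of $\beta_k$ or repeats. Your reduction to $A$ definite and the identity $\det(xI'_A+B') = f_{B'}(x)+\Gamma(x)$ with $\Gamma$ ghost are both fine.

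The gap is in the last paragraph. From $f_{B'}(x)+\Gamma(x)\models_{gs}f_B(x)$ you cannot in general conclude $f_{B'}\models_{gs}f_B$, even granting your claim that $\Gamma_k^\nu\le(c_k^{B'})^\nu$. Recall that for a tangible $a$, $a\models_{gs}b$ forces $a=b$. So if some $c_k^{B'}$ is tangible while $\Gamma_k$ is ghost with $\Gamma_k^\nu=(c_k^{B'})^\nu$, then $c_k^{B'}+\Gamma_k=(c_k^{B'})^\nu$ is ghost and can ghost-surpass any $c_k^{B}$ with $(c_k^{B})^\nu\le(c_k^{B'})^\nu$, yet $c_k^{B'}$ itself, being tangible and possibly $\ne c_k^{B}$, does not. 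Thus "the surpassing transfers from the sum back to $c_k^{B'}$ alone" is exactly what needs to be proved, and your argument only yields it when $\Gamma_k^\nu<(c_k^{B'})^\nu$ strictly. To close the gap you would need to show that whenever $\Gamma_k^\nu=(c_k^{B'})^\nu$, the coefficient $c_k^{B'}$ is already ghost — which amounts to the repeated-track analysis the paper carries out. In addition, your claim $\Gamma_k^\nu\le(c_k^{B'})^\nu$ is not established by the appeal to Lemma 2.13 alone: when the underlying permutation $\sigma$ does not stabilize the chosen index set $S$, the surviving $B'$-factors $\prod_{i\notin S}b'_{i,\sigma(i)}$ are \emph{not} a permutation track of a principal submatrix of $B'$ (the walk leaves $S^c$), so they are not a summand of $c_k^{B'}$, and Lemma 2.13 (which is about closed walks in a definite matrix, here applicable to $I'_A$) does not directly bound the $B'$-part by a term of $c_k^{B'}$. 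Both of these points would require a further cycle-splitting argument of the same flavour as the paper's Cases 1.2 and 2, so the proposal relocates the difficulty rather than avoiding it.
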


\begin{proof}

 Let $B'=A^{\nabla}BA$, where  $A$ is a non-singular matrix.

 \vskip 0.15 truecm

Let~$\bar{A}$ be the right definite form of~$A$ and~$E$ be its right conductor  which means~$$A=\bar{A}E\ \text{and}\ A^{\nabla}=E^{-1}\bar{A}^{\nabla}.$$ 
\vskip 0.15 truecm

Using Lemma 3.2, we notice that $A^{\nabla}BA$ and $\bar{A}^{\nabla}B\bar{A}$ share the same characteristic polynomial, since $$det(xI+A^{\nabla}BA)=det(xI+E^{-1}\bar{A}^{\nabla}B\bar{A}E)=$$$$det(E)^{-1}det(xI+\bar{A}^{\nabla}B\bar{A})det(E)=det(xI+\bar{A}^{\nabla}B\bar{A}).$$ Therefore it is sufficient to prove this theorem for the definite form of $A$, which according to Proposition~3.8 coincides with its Kleene star and ${n-1}$ power.

We consider $A$ to be a definite matrix. We denote the characteristic polynomials of~$B'$ and $B$ as $$f_{B'}(x)=x^n+\sum_{k=0}^{n-1} \beta'_k x^{n-k},\ \ \ f_{B}(x)=x^n+\sum_{k=0}^{n-1} \beta_k x^{n-k}$$respectively, and the entries of the matrices as $$A=(a_{i,j}),\ \ A^\nabla=A^*=A^{n-1}=A^{n}=(a'_{i,j})\ \text{ and }\ B=(b_{i,j}).$$

We  show that $\beta'_k\models_{gs} \beta_k\ \forall k$. That is, every summand in $\beta'_k$   either has an identical summand  in $\beta_k$ or appear twice, creating a ghost summand.
\vskip 0.2 truecm

\noindent The entry of $A^\nabla BA$ appears in the  $(i,\sigma(i))$ position is $\sum_{t,s=1}^n a'_{i,t}b_{t,s}a_{s,\sigma(i)}.$
\vskip 0.2 truecm

\noindent The summands in $\beta'_k$ (the sum of determinants of $k\times k$ principal sub-matrices)  are  \begin{equation}\prod_{j=1}^k a'_{i_j,t_j}b_{t_j,s_j}a_{s_j,\sigma(i_j)},\end{equation} where $\sigma\in S_k$, and $t_j,s_j\in\{1,...,n\}$.
\vskip 0.2 truecm

We can factor each permutation  into disjoint cycles $(i_j\ \sigma(i_j)\ \sigma^2(i_j)\ \cdots\ \sigma^{-1}(i_j))$, and we denote $i_j$ as~$j$.  We abbreviate $\ \sigma(j), \sigma^2(j),...,\sigma^{-1}(j)\ $ to $\ \sigma, \sigma^2, ... , \sigma^{-1}$.  

\noindent Therefore, each permutation track can be factored into cycle tracks:
\begin{equation}\prod_{\tiny{\begin{array}{c} \text{cycles}\\ \text{of}\ \sigma\end{array}}} (a'_{j,t_j}b_{t_j,s_j}a_{s_j,\sigma})(a'_{\sigma,t_\sigma}b_{t_\sigma,s_\sigma}a_{s_\sigma,\sigma^2}) \cdots(a'_{\sigma^{-1},t_{\sigma^{-1}}}b_{t_{\sigma^{-1}},s_{\sigma^{-1}}}a_{s_{\sigma^{-1}},j}),\end{equation} obtaining a product of brackets, where the left indices are distinct, describing a permutation. Changing the location of the brackets does not change the value of this term. We   use this relocation  of brackets to show that identical terms are being obtained on different permutation tracks.
\vskip 0.2 truecm

 The term $a'_{j,t_j}$ is a sum arising from the Kleene star: $$a'_{j,t_j}=\sum_{_{r_{_1},...,r_{_{n-2}}=1}}^n(a_{j,r_1}a_{r_1,r_2}\cdots a_{r_{n-2},t_j}).$$
 
Thus, we write  (5.3) as 

$$\prod_{\tiny{\begin{array}{c} \text{cycles}\\ \text{of}\ \sigma\end{array}}} 
(a_{j,r_{j,1}}a_{r_{j,1},r_{j,2}}\cdots a_{r_{j,n-2},t_j}b_{t_j,s_j}a_{s_j,\sigma})
(a_{\sigma,r_{\sigma,1}}a_{r_{\sigma,1},r_{\sigma,2}}\cdots a_{r_{\sigma,n-2},t_\sigma}b_{t_\sigma,s_\sigma}a_{s_\sigma,\sigma^2}) \cdots$$$$(a_{\sigma^{-1},r_{\sigma^{-1},1}}a_{r_{\sigma^{-1},1},r_{\sigma^{-1},2}}\cdots a_{r_{\sigma^{-1},n-2},t_{\sigma^{-1}}}b_{t_{\sigma^{-1}},s_{\sigma^{-1}}a_{s_{\sigma^{-1}}},j}).$$

\vskip 0.2 truecm

Next, we divide our proof into disjoint cases as {follows}:
$$\begin{array}{c}
                                                                                                                                \large\text{\framebox{$a'_{j,t_j}b_{t_j,s_j}a_{s_j,\sigma(j)}$}}\\
                                                                                                                             \normalsize \swarrow\ \ \ \ \ \ \ \ \ \ \ \ \ \ \ \ \ \ \ \ \ \ \ \ \ \searrow\\
\ \ \text{\framebox{$\begin{array}{c}\text{Case 1}\\ s_j=\sigma(j)\ \forall j\end{array}$}}\ \ \ \ \ \ \ \ \ \ \ \ \ \ \ \ \ \ \ \ \ \ \ \ \ \ \ \ \ \ \ \ \ \text{\framebox{$\begin{array}{c}\text{Case 2}\\ \exists j:\ s_j\ne \sigma(j)\end{array}$}} \\
                                                                     \ \      \swarrow\ \ \ \ \ \ \ \ \ \ \ \ \ \ \ \searrow\ \ \ \ \ \ \ \ \ \ \ \ \ \ \ \ \ \ \ \ \ \ \ \ \ \ \ \ \ \ \ \ \ \ \ \ \ \ \ \ \ \ \ \ \ \ \ \ \ \ \ \   \ \ \  \\
\text{\framebox{$\begin{array}{c}\text{Case 1.1}\\  j=t_j\ \forall j\\ \Rightarrow (5.2)\text{ is }\\ \prod b_{j,\sigma(j)}\end{array}$}}\ \ \ \ \text{\framebox{$\begin{array}{c}\text{Case 1.2}\\  \exists j:\ i_j\ne t_j\\ \Rightarrow (5.2)\text{ is }\\ \prod a'_{j,t_j}b_{t_j,\sigma(j)}\end{array}$}}      \ \ \ \ \ \ \ \ \ \ \ \ \ \ \ \ \ \ \ \ \ \ \ \ \ \ \ \ \ \ \ \ \ \   \ \ \ \ \ \ \ \ \ \ \ \ 
\end{array}$$
\vskip 0.2 truecm

\noindent In \textbf{case 1}, $s_j=\sigma(j)\ \forall j$. Thus  $a_{s_j,\sigma(j)}$ is diagonal and therefore equals $0$ for every $j$.

\noindent Next, if we have the  special \textbf{sub-case 1.1}, $ j=t_j\ \forall j$, then  $a'_{j,t_j}$  are also diagonal entries, equal to $0$. Therefore (5.2) is $\prod b_{j,\sigma(j)}$, which is a summand in~$f_B$.

At this point we have established $\beta'_k= \beta_k+g_k,$ where $g_k\in R\ \ \forall k.$ It remains to prove that~$g_k\in G\ \ \forall k$, that is every other summand is attained at least twice, creating a ghost summand.

\noindent In \textbf{sub-case 1.2}, stating that $\exists j:\ i_j\ne t_j$, some $a'_{j,t_j}$ is not diagonal. Thus (5.3) is $$\prod_{\tiny{\begin{array}{c} \text{cycles}\\ \text{of}\ \sigma\end{array}}} 
(a_{j,r_{j,1}}a_{\underline{r_{j,1}},r_{j,2}}\cdots a_{r_{j,n-2},t_j}b_{t_j,\sigma})
(a_{\sigma,r_{\sigma,1}}a_{\underline{r_{\sigma,1}},r_{\sigma,2}}\cdots a_{r_{\sigma,n-2},t_\sigma}b_{t_\sigma,\sigma^2}) \cdots$$$$(a_{\sigma^{-1},r_{\sigma^{-1},1}}a_{\underline{r_{\sigma^{-1},1}},r_{\sigma^{-1},2}}\cdots a_{r_{\sigma^{-1},n-2},t_{\sigma^{-1}}}b_{t_{\sigma^{-1}},j}).$$

\noindent We analyze the second index of each element: $r_{j,1},\ j=1,...,k$.

\vskip 0.2 truecm

 If all these  indices are distinct, then $r_{\sigma^l,1}\mapsto r_{\sigma^{l+1},1}$ describes a permutation. Its track is obtained by moving the  brackets  one position \textbf{forward}:
 $$\prod_{\tiny{\begin{array}{c} \text{cycles}\\ \text{of}\ \sigma\end{array}}} 
a_{j,r_{j,1}})(a_{r_{j,1},r_{j,2}}\cdots a_{r_{j,n-2},t_j}b_{t_j,\sigma}
a_{\sigma,r_{\sigma,1}})(a_{r_{\sigma,1},r_{\sigma,2}}\cdots a_{r_{\sigma,n-2},t_\sigma}b_{t_\sigma,\sigma^2} \cdots$$$$a_{\sigma^{-1},r_{\sigma^{-1},1}})(a_{r_{\sigma^{-1},1},r_{\sigma^{-1},2}}\cdots a_{r_{\sigma^{-1},n-2},t_{\sigma^{-1}}}b_{t_{\sigma^{-1}},j}=$$

 $$\prod_{\tiny{\begin{array}{c} \text{cycles}\\ \text{of}\ \sigma\end{array}}} 
(a_{r_{j,1},r_{j,1}}a_{r_{j,1},r_{j,2}}\cdots a_{r_{j,n-2},t_j}b_{t_j,\sigma}
a_{\sigma,r_{\sigma,1}})(a_{r_{\sigma,1},r_{\sigma,1}}a_{r_{\sigma,1},r_{\sigma,2}}\cdots a_{r_{\sigma,n-2},t_\sigma}b_{t_\sigma,\sigma^2}a_{\sigma^2,r_{\sigma^{2},1}}) $$$$\cdots(a_{r_{\sigma^{-1},1},r_{\sigma^{-1},1}}a_{r_{\sigma^{-1},1},r_{\sigma^{-1},2}}\cdots a_{r_{\sigma^{-1},n-2},t_{\sigma^{-1}}}b_{t_{\sigma^{-1}},j}a_{j,r_{j,1}}).$$

\vskip 0.25 truecm

If $r_{j,1}=r_{i,1}$ for some $j\ne i$, then:
\vskip 0.25 truecm

\noindent If it occurs \textit{in the same cycle}, we obtain a new permutation by factoring this cycle into two sub-cycles, using the points of repetition (we do not change any of the other cycles):

$$[(a_{j,\underline{r_{j,1}}}a_{\underline{r_{j,1}},r_{j,2}}\cdots a_{r_{j,n-2},t_j}b_{t_j,\sigma}) \cdots
(a_{\sigma^l,\underline{\underline{r_{j,1}}}}a_{\underline{\underline{r_{j,1}}},r_{\sigma^l,2}}\cdots a_{r_{\sigma^l,n-2},t_{\sigma^l}}b_{t_{\sigma^l},\sigma^{l+1}}) \cdots$$$$
(a_{\sigma^{-1},r_{\sigma^{-1},1}}a_{r_{\sigma^{-1},1},r_{\sigma^{-1},2}}\cdots a_{r_{\sigma^{-1},n-2},t_{\sigma^{-1}}}b_{t_{\sigma^{-1}},j})]=$$
$$[(a_{j,\underline{r_{j,1}}}a_{\underline{\underline{r_{j,1}}},r_{\sigma^l,2}}\cdots a_{r_{\sigma^l,n-2},t_{\sigma^l}}b_{t_{\sigma^l},\sigma^{l+1}}) \cdots(a_{\sigma^{-1},r_{\sigma^{-1},1}}a_{r_{\sigma^{-1},1},r_{\sigma^{-1},2}}\cdots a_{r_{\sigma^{-1},n-2},t_{\sigma^{-1}}}b_{t_{\sigma^{-1}},j})]$$$$\small
[(a_{\sigma^l,\underline{\underline{r_{j,1}}}}a_{\underline{r_{j,1}},r_{j,2}}\cdots a_{r_{j,n-2},t_j}b_{t_j,\sigma})\cdots (a_{\sigma^{l-1},{r_{\sigma^{l-1},1}}}a_{r_{\sigma^{l-1},1},r_{\sigma^{l-1},2}}\cdots a_{r_{\sigma^{l-1},n-2},t_\sigma^{l-1}}b_{t_\sigma^{l-1},\sigma^{l}})]$$
\normalsize  (the squared brackets indicate cycles).
\vskip 0.25 truecm

\noindent If an index $r_{j,1}$ repeats \textit{in two different cycles}, we obtain a new permutation by joining these cycles into a single cycle, using the points of repetition (we do not change any of the other cycles):
\small

$$[(a_{j,\underline{r_{j,1}}}a_{\underline{r_{j,1}},r_{j,2}}\cdots a_{r_{j,n-2},t_j}b_{t_j,\sigma})
\cdots(a'_{\sigma^{-1},t_{\sigma^{-1}}}b_{t_{\sigma^{-1}},j})]\cdot$$
$$[(a_{i,\underline{\underline{r_{j,1}}}}a_{\underline{\underline{r_{j,1}}},r_{i,2}}\cdots a_{r_{i,n-2},t_i}b_{t_i,\sigma(i)})
\cdots(a'_{\sigma^{-1}(i),t_{\sigma^{-1}(i)}}b_{t_{\sigma^{-1}(i)},i})]=$$

$$[(a_{j,\underline{r_{j,1}}}a_{\underline{\underline{r_{j,1}}},r_{i,2}}\cdots a_{r_{i,n-2},t_i}b_{t_i,\sigma(i)})
\cdots(a'_{\sigma^{-1}(i),t_{\sigma^{-1}(i)}}b_{t_{\sigma^{-1}(i)},i})$$$$(a_{i,\underline{\underline{r_{j,1}}}}a_{\underline{r_{j,1}},r_{j,2}}\cdots a_{r_{j,n-2},t_j}b_{t_j,\sigma})
\cdots(a'_{\sigma^{-1},t_{\sigma^{-1}}}b_{t_{\sigma^{-1}},j})]$$
\normalsize (the squared brackets indicate cycles).
\vskip 0.25 truecm

\noindent In \textbf{case 2}, $\exists j:\ s_j\ne \sigma(j)$, and thus some $a_{s_j,\sigma(j)}$ is not diagonal. 
\vskip 0.15 truecm

We obtain the second permutation track using the algorithm of case 1.2,  analyzing the  indices $s_j,\ j=1,...,k$.

\noindent  If all these  indices are distinct, then $s_{\sigma^l}\mapsto s_{\sigma^{l+1}}$ describes a permutation. Even if this permutation coincides with $\sigma$, you still obtain a different summand of (5.2). Its track is obtained by moving the  brackets  one position \textbf{backwards}:

$$\prod_{\tiny{\begin{array}{c} \text{cycles}\\ \text{of}\ \sigma\end{array}}} a'_{j,t_j}b_{t_j,s_j})(a_{s_j,\sigma}a'_{\sigma,t_\sigma}b_{t_\sigma,s_\sigma}) \cdots(a_{s_\sigma^{-2},\sigma^{-1}} a'_{\sigma^{-1},t_{\sigma^{-1}}}b_{t_{\sigma^{-1}},s_{\sigma^{-1}}})(a_{s_{\sigma^{-1}},j}=$$
$$\prod_{\tiny{\begin{array}{c} \text{cycles}\\ \text{of}\ \sigma\end{array}}} (a_{s_{\sigma^{-1}},j}a'_{j,t_j}b_{t_j,s_j})(a_{s_j,\sigma}a'_{\sigma,t_\sigma}b_{t_\sigma,s_\sigma}) \cdots(a_{s_\sigma^{-2},\sigma^{-1}}a'_{\sigma^{-1},t_{\sigma^{-1}}}b_{t_{\sigma^{-1}},s_{\sigma^{-1}}}).$$
\vskip 0.15 truecm

\noindent(\small{This case is dual to the previous argument in the sense that 1.2 \textit{starts} where the entries of~$A$ are diagonal, and is \textit{being repeated} by a permutation track with no such restriction.})\normalsize
\vskip 0.2 truecm

\normalsize
\noindent In all  remaining summands  $s_j=s_i$, for some $j\ne i$. Then like in case 1.2, we factor and disjoint cycles, using the points of repetition, without changing the non-relevant cycles:
$$[ (a'_{j,t_j}b_{t_j,\underline{s_j}}a_{\underline{s_j},\sigma})\cdots(a'_{\sigma^l,t_{\sigma^l}}b_{t_{\sigma^l},\underline{\underline{s_j}}}a_{\underline{\underline{s_j}},\sigma^{l+1}}) \cdots(a'_{\sigma^{-1},t_{\sigma^{-1}}}b_{t_{\sigma^{-1}},s_{\sigma^{-1}}}a_{s_{\sigma^{-1}},j})]=$$
$$[ (a'_{j,t_j}b_{t_j,\underline{s_j}}a_{\underline{\underline{s_j}},\sigma^{l+1}}) \cdots(a'_{\sigma^{-1},t_{\sigma^{-1}}}b_{t_{\sigma^{-1}},s_{\sigma^{-1}}}a_{s_{\sigma^{-1}},j})]\cdot$$
$$[(a'_{\sigma^l,t_{\sigma^l}}b_{t_{\sigma^l},\underline{\underline{s_j}}}a_{\underline{s_j},\sigma})\cdots (a'_{\sigma^{l-1},t_{\sigma^{l-1}}}b_{t_{\sigma^{l-1}},s_{\sigma^{l-1}}}a_{s_{\sigma^{l-1}},\sigma^l})],$$

\noindent and

$$[ (a'_{j,t_j}b_{t_j,\underline{s_j}}a_{\underline{s_j},\sigma}) \cdots(a'_{\sigma^{-1},t_{\sigma^{-1}}}b_{t_{\sigma^{-1}},s_{\sigma^{-1}}}a_{s_{\sigma^{-1}},j})]\cdot$$
$$ [(a'_{i,t_i}b_{t_i,\underline{\underline{s_j}}}a_{\underline{\underline{s_j}},\sigma(i)})\cdots(a'_{\sigma^{-1}(i),t_{\sigma^{-1}(i)}}b_{t_{\sigma^{-1}(i)},s_{\sigma^{-1}(i)}}a_{s_{\sigma^{-1}(i)},i})]=$$

$$[ (a'_{j,t_j}b_{t_j,\underline{s_j}}a_{\underline{\underline{s_j}},\sigma(i)})\cdots(a'_{\sigma^{-1}(i),t_{\sigma^{-1}(i)}}b_{t_{\sigma^{-1}(i)},s_{\sigma^{-1}(i)}}a_{s_{\sigma^{-1}(i)},i})\cdot
$$$$ (a'_{i,t_i}b_{t_i,\underline{\underline{s_j}}}a_{\underline{s_j},\sigma}) \cdots(a'_{\sigma^{-1},t_{\sigma^{-1}}}b_{t_{\sigma^{-1}},s_{\sigma^{-1}}}a_{s_{\sigma^{-1}},j})].$$

\vskip 0.2 truecm

\noindent The assertion then follows.

\end{proof}

\begin{cor} If $B$ is tropically similar to $B'$, then

\noindent 1. Every supertropical eigenvalue of $B$ is a supertropical eigenvalue of $B'$.

\noindent 2. If $f_{B'}\in T[x]$ then $f_{B'}=f_B$.

\noindent 3.   $B$ satisfies $f_{B'}$ in the sense that $f_{B'}(B)$ is ghost..

\end{cor}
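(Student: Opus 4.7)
The plan is to deduce all three assertions directly from Theorem 5.4, which gives $f_{B'} \models_{gs} f_B$, together with the basic properties of $\models_{gs}$ stated in the introduction. Writing $f_{B'}(x) = \sum_i \beta'_i x^i$ and $f_B(x) = \sum_i \beta_i x^i$, Theorem 5.4 supplies the coefficientwise data $\beta'_i = \beta_i + g_i$, where each $g_i$ is either $0_R$ or a ghost element of $G$. All three parts follow by inspecting what this says after substitution.

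For part (1), the characterisation of supertropical eigenvalues via the characteristic polynomial (Definition 2.26 and the discussion following it) says that $\alpha \in T \cup \{0_R\}$ is an eigenvalue of $B$ exactly when $f_B(\alpha) \in G \cup \{0_R\}$. Substituting and regrouping,
\[
f_{B'}(\alpha) \;=\; \sum_i \beta'_i \alpha^i \;=\; f_B(\alpha) + \sum_i g_i \alpha^i,
\]
and the rightmost sum is ghost or $0_R$ because $G$ is closed under multiplication by arbitrary elements of $R$. Thus $f_{B'}(\alpha)$ is a sum of ghost/zero terms, so it lies in $G \cup \{0_R\}$, making $\alpha$ a supertropical eigenvalue of $B'$.

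For part (2), if every $\beta'_i$ is tangible, the equation $\beta'_i = \beta_i + g_i$ with $g_i \in G \cup \{0_R\}$ forces $g_i = 0_R$, since $T$ and $G$ are disjoint and a nonzero ghost summand would make the sum ghost. Hence $\beta'_i = \beta_i$ for every $i$, so $f_{B'} = f_B$. For part (3), Theorem 2.27 (Supertropical Hamilton--Cayley) gives that $f_B(B)$ is ghost; the same decomposition as in (1) yields
\[
f_{B'}(B) \;=\; f_B(B) + \sum_i g_i B^i,
\]
and each entry of $\sum_i g_i B^i$ is a sum of products each carrying the factor $g_i \in G \cup \{0_R\}$, hence lies in $G \cup \{0_R\}$ by the ideal property of $G$. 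Therefore $f_{B'}(B)$ is the sum of two ghost matrices, hence ghost.

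Since Theorem 5.4 has already done the real work, there is no substantive obstacle here; the argument is essentially bookkeeping with the relation $\models_{gs}$. The only auxiliary facts used are that $T$ and $G$ are disjoint (needed for the rigidity step in (2)) and that $G$ is an ideal of $R$ that absorbs arbitrary products (needed in (1) and (3)); both are immediate from the definition of the standard supertropical semiring given at the start of the paper.
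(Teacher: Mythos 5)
Your argument is correct and follows the same route as the paper: all three parts are read off from Theorem 5.4 via the observation that coefficientwise ghost-surpassing is preserved under evaluating at a scalar (for parts 1 and 2) or at the matrix $B$ (for part 3, combined with Hamilton--Cayley). The paper states this compactly as $f_{B'}(\lambda)\models_{gs}f_B(\lambda)\in G$ and $f_{B'}(B)\models_{gs}f_B(B)\in G$, leaving implicit the ideal-absorption step $\sum_i g_i\alpha^i\in G\cup\{0_R\}$ that you spell out; the minor citation slips (Hamilton--Cayley is Theorem~2.28, the eigenvalue definition is 2.27) do not affect the argument.
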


\begin{proof}

$ $

\noindent 1. $B$ is tropically similar to $B'$. If $\lambda$ is an eigenvalue of $B$, then $f_{B'}(\lambda)\models_{gs} f_{B}(\lambda)\in G$.

\noindent 2. Follows immediately from the theorem.

\noindent 3. By  Theorem 2.28  we get $f_{B'}(B)\models_{gs} f_{B}(B)\in G.$

\end{proof}

\section{The characteristic polynomial of the pseudo-inverse.}

Current research regarding the eigenspaces of a supertropical matrix shows that these spaces may  behave in an undesirable fashion, as shown in ~\cite[Example 5.7]{STMA2}. The following chapter provides properties regarding the  characteristic polynomial, eigenvalues, determinant and trace of $A^\nabla$, which may provide techniques for solving the dependency problem  of supertropical eigenspaces.
\vskip 0.25 truecm

We observe the following motivating example.
\vskip 0.25 truecm

\begin{exa}

Let $A=\left(
\begin{array}{ccc}
1 & 0 & - \\
3 & 4 & - \\
- & - & 1
\end{array}
\right)$ (where $-$ denotes $-\infty$). Then~$$f_A(x)=x^3+4x^2+5^\nu x+6$$ and $$A^\nabla=6^{-1}\left(
\begin{array}{ccc}
5 & 1 & - \\
4 & 2 & - \\
- & - & 5
\end{array}
\right)=\left(
\begin{array}{ccc}
-1 & -5 & - \\
-2 & -4 & - \\
- & - & -1
\end{array}
\right).$$ As a result we get $f_{A^\nabla}(x)=x^3+(-1)^\nu x^2+(-2) x-6$. Multiplying by $det(A)$ we get~$$6x^3+5^\nu x^2+4 x+0,$$ which has the same coefficients  of $f_A$ but in opposite order. Consequently, the inverse of every supertropical eigenvalue of $A$ is a supertropical eigenvalue of $A^\nabla$, obtained in the opposite order.
\end {exa}

 In the following conjecture we formulate how the $k^{th}$ coefficient  of the characteristic polynomial of $A^\nabla$ is related to the $(n-k)^{th}$ coefficient  of the characteristic polynomial of $A$.

\begin{con} If $A=(a_{i,j})\in M_n(R)$ is a non-singular matrix, then $$det(A)f_{A^\nabla}(x)\models_{gs} x^nf_A(x^{-1}).$$ An equivalent formulation is $$det(A)b_k\models_{gs} a_{n-k}\ \forall k=0,...,n.$$\end{con}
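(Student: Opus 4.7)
The plan is to convert the polynomial identity into a coefficient-wise one, then into a single-minor (Jacobi-type) identity, and to handle each promised case by specialization or by a combinatorial matching. Since $x^n f_A(x^{-1})=\sum_{k=0}^n a_{n-k} x^k$ and $det(A)f_{A^\nabla}(x)=\sum_k det(A) b_k x^k$, the conjecture is equivalent to $det(A)b_k\models_{gs} a_{n-k}$ for every $k$. By the principal-submatrix expansion of the characteristic polynomial (Definition 2.26), denoting by $A[K|K]$ the principal submatrix on rows and columns indexed by $K$, this becomes $det(A)\sum_{|I|=n-k} det(A^\nabla[I|I])\models_{gs}\sum_{|J|=k} det(A[J|J])$. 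Pairing each index set $I$ with its complement $I^c$, it would suffice to prove the local (supertropical Jacobi) statement $det(A)\,det(A^\nabla[I|I])\models_{gs} det(A[I^c|I^c])$ for every $I\subseteq\{1,\ldots,n\}$; equivalently, using $A^\nabla=det(A)^{-1} adj(A)$ and the scaling $det(\alpha M)=\alpha^{|I|} det(M)$, the statement $det(adj(A)[I|I])\models_{gs} det(A)^{|I|-1} det(A[I^c|I^c])$.

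Next I would verify the promised special cases one by one. The boundary cases $|I|\in\{0,n\}$ follow from $det(A^\nabla)=det(A)^{-1}$ (Theorem 2.17(ii)), giving equality. For the trace coefficient $k=n-1$ (so $|I|=1$), the identity collapses to the Laplace-expansion fact $\sum_i det(A_{i,i})=a_1$, again giving equality; for $k=1$ (so $|I|=n-1$) one expands a single $(n-1)$-minor of $A^\nabla$ and matches term by term against $a_{jj}$ with $\{j\}=I^c$. For a triangular matrix $A$, only the identity permutation contributes non-trivially to each principal minor of $A$ and to each diagonal entry of $A^\nabla$, and a short computation yields $det(A)\,det(A^\nabla[I|I])=\prod_{j\in I^c} a_{jj}=det(A[I^c|I^c])$ with equality. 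For $n=2$ the identity reduces to a direct $2\times 2$ expansion, and for $n=3$ to an enumeration in the spirit of Example 6.1, where every extra summand of $det(adj(A)[I|I])$ that does not match a summand on the right is shown to appear twice and hence to be ghost.

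For the conjecture in full generality the strategy I would attempt is a combinatorial matching modeled on the proof of Theorem 5.4: expand $det(adj(A)[I|I])$ as a sum indexed by $\tau\in S_I$ together with collections $(\pi_i)_{i\in I}$ of permutations in $S_n$ satisfying $\pi_i(\tau(i))=i$; expand $det(A)^{|I|-1} det(A[I^c|I^c])$ analogously as $(|I|-1)$-tuples of permutations in $S_n$ paired with a permutation of $I^c$; then construct a canonical injection of right-hand tracks into left-hand tracks and an involution on the unmatched left-hand tracks via cycle-splitting or cycle-joining at a repeated index, so that each unmatched track becomes a doubled (ghost) summand. The main obstacle is precisely this last step in full generality: each left-hand track carries $|I|$ permutations of $\{1,\ldots,n\}$ coupled through $\tau$, and reorganizing them into one permutation of $I^c$ plus $|I|-1$ unrestricted permutations of $\{1,\ldots,n\}$ requires bookkeeping considerably more intricate than in Theorem 5.4. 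Constructing the pairing involution for the excess terms appears to demand a case analysis whose complexity grows with $|I|$, which is presumably why the paper establishes the conjecture only in the special cases above and leaves the general statement as a conjecture.
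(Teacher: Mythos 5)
Your overall framework is sound and close in spirit to what the paper actually does for this statement: the statement is a conjecture, the paper only establishes special cases (Proposition 6.3 for the coefficients $k\in\{0,n-2,n-1,n\}$, plus triangular, $2\times2$, $3\times3$ and a computer check for $4\times4$ in Example 6.4), and your reduction to the coefficientwise form $det(A)b_k\models_{gs}a_{n-k}$, the pairing of each principal index set $I$ of $A^\nabla$ with its complement, and the resulting supertropical Jacobi-type target $det(adj(A)[I|I])\models_{gs}det(A)^{|I|-1}det(A[I^c|I^c])$ are all legitimate (ghost surpassing is preserved under sums, so the local statement would indeed imply the conjecture). Your treatment of $k=0,n$, of $k=n-1$ (trace of $A^\nabla$), and of triangular matrices coincides with the paper's arguments.

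There are, however, two concrete problems. First, your claim that the case $k=1$ (that is, $det(A)b_1\models_{gs}tr(A)$, the case $|I|=n-1$) follows by expanding a single $(n-1)$-minor of $A^\nabla$ and ``matching term by term'' is a genuine gap: after clearing denominators this is precisely the Jacobi identity for $(n-1)\times(n-1)$ principal minors of $adj(A)$, essentially the full strength of the conjecture. A principal $(n-1)$-minor of $adj(A)$ contains many tracks that do not factor as $a_{jj}\cdot det(A)^{n-2}$, and one must show that all such excess tracks pair up into ghosts --- exactly the bookkeeping you yourself identify as the obstacle in the general case. The paper does not prove this case either; the ``trace coefficient'' it resolves is $b_{n-1}=tr(A^\nabla)$, not $a_{n-1}=tr(A)$. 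Second, you omit the one nontrivial general-$n$ case the paper does prove, $k=n-2$: there $b_{n-2}$ is the sum of $2\times2$ principal minors of $A^\nabla$, and the paper's argument multiplies each product of two adjoint entries by the ``switched'' factors $a_{\sigma(i_1),i_2}a_{\sigma(i_2),i_1}$ rather than $a_{\sigma(i_1),i_1}a_{\sigma(i_2),i_2}$, invokes the fact (quoted there from Izhakian--Rowen) that a product of entries in which every index occurs equally often as a row and as a column index decomposes into permutation tracks, and then shows each summand either reproduces a summand of $a_2$ or occurs twice and is therefore ghost. Your $n=3$ enumeration recovers this only for $n\le 3$, so as written your proposal establishes strictly less than Proposition 6.3, and the list of cases you claim does not match what your sketched methods actually verify.
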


We resolve this conjecture for some special cases described in Proposition 6.3 and Example 6.4.

\begin{pro} Let~$A\in M_n(R)$. We denote the characteristic polynomials of the matrix~$A=\left(a_{i,j}\right)$ and~$A^\nabla=\left(\frac{a'_{i,j}}{det(A)}\right)$ as~$f_A(x)=\sum_{k=0}^n a_kx^k$ and~$f_{A^\nabla}(x)=\sum_{k=0}^n b_kx^k$ respectively, noting that~$x^nf_A(x^{-1})=\sum_{k=0}^n a_{n-k}x^k.$ Then, $$det(A)b_k\models_{gs} a_{n-k},\ \text{where}\  k\in\{0,n-2,n-1,n\}.$$
\vskip 0.25 truecm
\end{pro}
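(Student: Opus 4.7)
The plan is to verify the inequality case-by-case; three of the four values of $k$ reduce to direct identities, and the fourth ($k = n-2$) reduces to a pairwise supertropical Desnanot-Jacobi-type identity which I would prove combinatorially.

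For the three routine cases I compute both coefficients explicitly. When $k = n$, the leading coefficients of $f_A$ and $f_{A^\nabla}$ are both $1_R$ and $a_0 = det(A)$, giving $det(A)\, b_n = det(A) = a_0$. When $k = 0$, Theorem 2.17(ii) applied to $A^\nabla = det(A)^{-1} adj(A)$ gives $b_0 = det(A^\nabla) = det(A)^{-n}\cdot det(A)^{n-1} = det(A)^{-1}$, so $det(A)\, b_0 = 1_R = a_n$. When $k = n-1$, the coefficient $b_{n-1}$ is the trace of $A^\nabla$, equal to $\sum_i a'_{i,i}/det(A) = \sum_i det(A_{i,i})/det(A)$, whence $det(A)\, b_{n-1} = \sum_i det(A_{i,i}) = a_1$. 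In each of these three cases the relation is actually equality, hence ghost surpassing holds trivially.

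For $k = n-2$, I would expand $b_{n-2} = \sum_{i<j} det(A^\nabla_{\{i,j\},\{i,j\}})$ using the $2 \times 2$ determinant formula and $A^\nabla_{rs} = det(A_{s,r})/det(A)$. Writing $A^{(i,j)}$ for the $(n-2)\times(n-2)$ principal submatrix obtained by deleting rows and columns $i, j$, so that $a_2 = \sum_{i<j} det(A^{(i,j)})$, one sees that the statement $det(A)\,b_{n-2} \models_{gs} a_2$ would follow from the pairwise inequality
\[
det(A_{i,i})\,det(A_{j,j}) + det(A_{j,i})\,det(A_{i,j}) \models_{gs} det(A)\,det(A^{(i,j)})
\]
for each pair $i<j$, using that $\models_{gs}$ is preserved by addition and by multiplication by tangibles (property 2 of the relation). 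I would prove this pairwise inequality combinatorially. Each monomial on either side is a product of $2n-2$ matrix entries, whose underlying multiset of entries $a_{r,s}$ forms a directed multigraph on $\{1,\ldots,n\}$ with in- and out-degree $1$ at $i,j$ and $2$ elsewhere. The RHS monomials come from pairs $(\pi,\tau) \in S_n \times S(\{i,j\}^c)$, while the LHS monomials come either from pairs $(\sigma_1,\sigma_2) \in S(\{i\}^c) \times S(\{j\}^c)$ (``type~1'') or from bijections $\mu : \{j\}^c \to \{i\}^c$ and $\nu : \{i\}^c \to \{j\}^c$ (``type~2''). My matching recipe: if the $\pi$-cycles through $i$ and $j$ are distinct, route the edges incident to $i$ into $\sigma_2$ and those incident to $j$ into $\sigma_1$, then propagate color assignments to the remaining edges via the Eulerian property, producing a type~1 representation; if $i$ and $j$ lie on a common $\pi$-cycle, slice that cycle at $i$ and at $j$ to obtain the $\mu$- and $\nu$-arcs of a type~2 representation.

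The main obstacle is ensuring this edge-regrouping is always consistent, particularly when $\tau$-edges share vertices with the distinguished $\pi$-cycles so that several distinct colorings may realize the same edge multiset; this produces duplicated LHS monomials, which create ghost summands. Such ghosts do not spoil ghost surpassing, but one must still verify that no RHS monomial goes unmatched. I expect a case analysis along the lines of the classical Desnanot-Jacobi identity (but without any signs) to handle all configurations, with Lemma 2.13 available to dominate any stray cycle-product terms that arise during the bookkeeping. Summing the pairwise inequalities over all $i<j$ and dividing by $det(A)$ then yields $det(A)\,b_{n-2} \models_{gs} a_2$, completing the proof.
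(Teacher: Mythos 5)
Your handling of $k\in\{0,\,n-1,\,n\}$ is correct and coincides with the paper's computation, and your reduction of the $k=n-2$ case to the pairwise inequality $det(A_{i,i})det(A_{j,j})+det(A_{j,i})det(A_{i,j})\models_{gs} det(A)\,det(A^{(i,j)})$ (with $A^{(i,j)}$ the principal submatrix deleting rows and columns $i,j$) is a legitimate reorganization, since $\models_{gs}$ is preserved under sums and under multiplication by the tangible $det(A)^{-1}$. The gap is in the combinatorial core, which is exactly where the content of the proposition lies. Your matching recipe ``distinct $\pi$-cycles through $i,j$ give a type~1 split; a common cycle gives a type~2 split by slicing at $i$ and $j$'' is false in both directions. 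Take $n=4$, $i=1$, $j=2$, $\tau=(3\,4)$. For $\pi=(1\,3\,2\,4)$ (so $i,j$ lie on one cycle) the sliced arcs are $1\to3\to2$ and $2\to4\to1$, and the leftover $\tau$-cycle $(3\,4)$ would have to cover exactly $\{4\}$ on one side and exactly $\{3\}$ on the other, so no type~2 split exists; here only type~1 works, via $\{1\to3,\,3\to4,\,4\to1\}$ and $\{3\to2,\,2\to4,\,4\to3\}$. For $\pi=(1\,3)(2\,4)$ (distinct cycles) type~1 is impossible, because the part avoiding $j=2$ is forced to contain both $1\to3$ and $3\to1$ and then cannot cover vertex $4$; only type~2 works, via the paths $2\to4\to3\to1$ and $1\to3\to4\to2$. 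So which type occurs cannot be read off from the cycle structure of $\pi$ alone; it depends on how $\tau$ interlaces with $\pi$ (e.g.\ on where an alternating trail started at $i$ in the combined edge multiset terminates), and settling this is the lemma itself, not deferrable bookkeeping.

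There is also a gap in the logical structure of what you set out to verify. Establishing that ``no RHS monomial goes unmatched'' only gives the covering direction; since $L\models_{gs}R$ means $L=R+\text{ghost}$, you must in addition control every left-hand monomial that does not correspond to a right-hand one, showing it is either dominated by $R$ or occurs at least twice (hence contributes a ghost). You treat duplicated LHS monomials merely as a harmless artifact of your coloring, but without the complementary argument a tangible, unduplicated LHS monomial strictly dominating $R$ would defeat $\models_{gs}$. This missing half is precisely the direction the paper's proof concentrates on: it takes each summand $a'_{i_1,\sigma(i_1)}a'_{i_2,\sigma(i_2)}/det(A)$ of $det(A)b_{n-2}$, multiplies by the ``switched'' entries $a_{\sigma(i_1),i_2}a_{\sigma(i_2),i_1}$, decomposes the resulting degree-regular edge multiset into two permutation tracks via \cite[Proposition 3.17]{STMA}, and concludes that the summand is either dominated by a summand of $a_2$ or reappears as the summand attached to the transposed $2\times2$ permutation in $adj(A)$, creating a ghost. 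So, beyond the three easy coefficients, your proposal as written does not yet constitute a proof of the $k=n-2$ case: both the matching criterion and the two-sided monomial accounting need to be repaired.
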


\begin{proof}
\noindent (i) Since $det(A)=det(A^\nabla)^{-1}$, we have $$det(A)b_0=det(A)det(A^\nabla)=0=a_{n}$$ and $$det(A)b_n=det(A)=a_0.$$ 
\vskip 0.2 truecm

\noindent (ii) We have~$det(A)b_{n-1}=a_{1}$ since $$det(A)tr(A^\nabla)=det(A)b_{n-1}=det(A)\sum \frac{a'_{i,i}}{det(A)}=\sum a'_{i,i},$$ which  is indeed the sum of determinants of all~$(n-1)\times (n-1)$ principal sub-matrices in $A$.
\vskip 0.2 truecm

\noindent (iii) We have~$det(A)b_{n-2}\models_{gs} a_{2},$ where~$b_{n-2}$ is the sum of determinants of~$2\times 2$ principal sub matrices in~$A^\nabla$, and \begin{equation}\label{coeff}det(A)b_{n-2}=det(A)\sum_{^{^{1\leq i_{_1}<i_{_2}\leq n}}}\sum_{\sigma\in S_2}\frac{a'_{i_{_1},\sigma(i_{_1})}}{det(A)}\frac{a'_{i_{_2},\sigma(i_{_2})}}{det(A)}\ .\end{equation}
\vskip 0.2 truecm

For  a product of entries of $A=(a_{i,j})$, we abuse  the graph theory notations of $d_{out}(r)$ and~$d_{in}(r)$  to denote the number of appearances of $r$ as a left index and    as a right index, respectively.
 We use ~\cite[Proposition 3.17]{STMA},   stating that  a  product of $kn$ entries of $A$ \begin{equation}\label{prod}\prod_{j=1}^{kn} a_{s_j,t_j},\ \  \text{where}\  d_{out}(s_j)=d_{in}(t_j)=k,\ \forall s_j,t_j=1,...,n,\end{equation}    is a product of $k$ permutation tracks of $A$.
\vskip 0.2 truecm

The numerator in \ref{coeff} \begin{equation}\label{adjper} \prod_{j=1}^2 a'_{i_j,\sigma(i_j)}\end{equation} is the product of two entries of the adjoin of $A$, located in  positions~$(i_1,\sigma(i_1))$ and ~$(i_2,\sigma(i_2))$. That is, \ref{adjper} is on the permutation track 
$i_1\mapsto \sigma(i_1),\ i_2\mapsto \sigma(i_2)$ of the $2\times 2$ principle sub-matrix of $i_1$ and $i_2$ in~$adj(A)$. Thus,~$\prod_{_{j=1}}^{^2} a'_{i_j,\sigma(i_j)}a_{\sigma(i_j),i_j}$ is the product of two permutation tracks of~$A$:
\begin{equation}\label{detper} [a_{1,\pi_1(1)}\cdots a_{n,\pi_1(n)} ][   a_{1,\pi_2(1)}\cdots a_{n,\pi_2(n)}]=\prod_{j=1}^2\prod_{r=1}^n  a_{r,\pi_j(r)}, \end{equation} where $ a_{\sigma(i_j),i_j}$ appears in the track of $\pi_j\in S_n$, and $$d_{out}(r)=d_{in}(\pi_j(r))=2,\ \forall r=1,...,n,\ j=1,2.$$
\vskip 0.2 truecm

Product \ref{detper} includes
\begin{equation}\label{2per} \prod_{j=1}^2 a_{\sigma(i_j),i_j},\end{equation} 
 which is a permutation track in a~$2\times 2$ principle sub-matrix of $A$, and $$d_{out}(\sigma(i_j))=d_{in}( i_j)=1,\ \forall j=1,2.$$

\vskip 0.2 truecm

Analyzing \ \ref{adjper}, we would like to find that when multiplied by~$\frac{det(A)}{det(A)^2}$, it either describes \ a summand in~$a_2$, dominated \ by one, or appears in \ another permutation track in a~$2\times 2$ principle sub-matrix of~$adj(A)$. We show that by applying a switch (a change of permutation on the same set of indices) on~$\sigma$. That is, we multiply \ref{adjper} by~$a_{\sigma(i_1),i_2} a_{\sigma(i_2),i_1}$, instead of~$a_{\sigma(i_1),i_1} a_{\sigma(i_2),i_2}$, as we did in \ref{detper}:
\begin{equation}\label{switch}a'_{i_1,\sigma(i_1)}a_{\sigma(i_1),i_2} a'_{i_2,\sigma(i_2)}a_{\sigma(i_2),i_1}.\end{equation}
Since the index counting in \ref{switch} do not chance (from \ref{detper}), it also describes the product of two permutation tracks of $A$:
\begin{equation}\label{switchper} [a_{1,\pi'_1(1)}\cdots a_{n,\pi'_1(n)}][ a_{1,\pi'_2(1)}\cdots a_{n,\pi'_2(n)}]=\prod_{j=1}^2\prod_{r=1}^n  a_{r,\pi'_j(r)},\end{equation}  where $d_{out}(r)=d_{in}(\pi'_j(r))=2\ \text{and}\ \pi'_j\in S_n,\ \forall r=1,...,n,\  j=1,2.$
\vskip 0.3 truecm

We look for the location of $ a_{\sigma(i_2),i_1}$ and $ a_{\sigma(i_1),i_2}$ in \ref{switchper}. 
\vskip 0.4 truecm

\noindent \underline{The summands obtaining $a_2$}:

\noindent If both appear in the same permutation track, without loss of generality $\pi'_1$, then the track of  $\pi'_2$, which is equal to, or dominated by $det(A)$, appears in \ref{adjper}. That is, we can consider $\pi'_2$ to be the permutation  attaining $det(A)$, otherwise, it is dominated by the summand having the track attaining $det(A)$ instead of $\pi'_2$. 

\noindent Next, since $a_{\sigma(i_1),i_2} a_{\sigma(i_2),i_1}$ is a track of a permutation of  $S_2$ in $A$, included in the track of $\pi'_1$, we have that~$\frac{ a_{1,\pi'_1(1)}\cdots a_{n,\pi'_1(n)}}{ a_{_{\sigma(i_1),i_2}} a_{_{\sigma(i_2),i_1}}}$ is a track of a permutation of  $S_{n-2}$ in $A$:
$$\frac{det(A)}{det(A)^2}\frac{ det(A)a_{1,\pi'_1(1)}\cdots a_{n,\pi'_1(n)}}{ a_{_{\sigma(i_1),i_2}} a_{_{\sigma(i_2),i_1}}}=\frac{ a_{1,\pi'_1(1)}\cdots a_{n,\pi'_1(n)}}{ a_{\sigma(i_1),i_2} a_{\sigma(i_2),i_1}},$$ which is a summand in $a_2$.
\vskip 0.4 truecm

\noindent \underline{The summands obtaining a ghost}:

\noindent If $ a_{\sigma(i_2),i_1}$ and $ a_{\sigma(i_1),i_2}$ appears in different permutation tracks, without loss of generality~$a_{\sigma(i_2),i_1}$ on the track of $\pi'_1$ and   $a_{\sigma(i_1),i_2}$ on the track of $\pi'_2$, then 
\begin{equation}\label{secper}\frac{[ a_{1,\pi'_1(1)}\cdots a_{n,\pi'_1(n)} ][   a_{1,\pi'_2(1)}\cdots a_{n,\pi'_2(n)}]}{ a_{\sigma(i_2),i_1} \ \ a_{\sigma(i_1),i_2}},\end{equation}  which is identical to \ref{adjper} (we  multiplied by   $a_{\sigma(i_2),i_1} a_{\sigma(i_1),i_2}$ instead of $ a_{\sigma(i_1),i_1} a_{\sigma(i_2),i_2}$ in the numerator, and then canceled it in the dominator), describes the product of two adjoint-entries, located in  positions~$(i_1,\sigma(i_2))$ and~$(i_2,\sigma(i_1))$, and described by the permutations~$\pi'_1$ and $\pi'_2$ respectively.

As a result, products \ref{adjper}  and \ref{secper} describe two identical summands   obtained on the permutation tracks of $\begin{cases}i_1\mapsto \sigma(i_1)\\ i_2\mapsto \sigma(i_2)\end{cases}$ and $\begin{cases}i_1\mapsto \sigma(i_2)\\ i_2\mapsto \sigma(i_1)\end{cases}$ in $adj(A)$, respectively.

\end{proof}

\begin{exa}

$ $

\noindent 1. If  $A=(a_{i,j})$ is a triangular, non-singular matrix, then $A^\nabla$ and all principal sub-matrices of $A$ and $A^\nabla$ are triangular. Denoting  $A^\nabla=(b_{i,j})$, we have $b_{i,i}=a_{i,i}^{-1}$.  Noting that the determinant of a triangular matrix comes from its diagonal, we get $$det(A)f_{A^\nabla}(x)=x^nf_A(x^{-1}).$$
\vskip 0.25 truecm

\noindent 2. By Proposition 6.3,  if  $A\in M_i(R)$, where $i=2,3$, then $$det(A)f_{A^\nabla}(x)\models x^if_A(x^{-1})\ \text{(with equality when}\ i=2),$$ since the coefficient of $x^{i-1}$ is the trace, the coefficient of $x^{i-2}$ is $b_{n-2}$  and the constant is the determinant. The $4\times 4$ case has been verified by a computer calculation.
\vskip 0.25 truecm

\end {exa}

Consequences of Conjecture 6.2 would be:
\vskip 0.2 truecm

\noindent 1. The inverse of every supertropical eigenvalue of $A$ is a supertropical eigenvalue of~$A^\nabla$, since if $\lambda$ is an eigenvalue of $A$, then $det(A)f_{A^\nabla}(\lambda^{-1})\models_{gs} (\lambda^{-1})^nf_A(\lambda)\in G.$
\vskip 0.2 truecm

\noindent 2.  If $f_{A^\nabla}\in T[x]$ then $det(A)f_{A^\nabla}(x)= x^nf_A(x^{-1})$ (follows immediately).
\vskip 0.2 truecm

\noindent 3. If $f_{A^{\nabla\nabla}}\in T[x]$ then $f_{A^{\nabla\nabla}}=f_A$, since applying (6.2) to $A$ and $A^\nabla$, we have $$det(A)^{-1}f_{A^{\nabla\nabla}}(x)\models_{gs}x^nf_{A^\nabla}(x^{-1})\ \text{ and }\ det(A)f_{A^{\nabla}}(x)\models_{gs}x^nf_{A}(x^{-1}).$$ Combining these two equations we get $$det(A)det(A)^{-1}f_{A^{\nabla\nabla}}(x)\models_{gs}det(A)x^nf_{A^\nabla}(x^{-1})\models_{gs}x^nx^{-n}f_{A}(x).$$ Then, if $f_{A^{\nabla\nabla}}\in T[x]$, we have that $f_{A^{\nabla\nabla}}=f_A$.

\vskip 0.95 truecm

\end{document}